\documentclass{amsart}
\usepackage{hyperref}
\usepackage{geometry}
\usepackage{enumerate}
\usepackage{amsmath,amssymb,amsthm,mathrsfs,amsfonts,dsfont}
\usepackage{graphicx}
\usepackage{tikz}
\usetikzlibrary{shapes}
\usetikzlibrary{plotmarks}
\usetikzlibrary{arrows}
\usetikzlibrary{positioning}
\usepackage{tkz-graph}
\setlength{\parindent}{0pt}
\setlength{\parskip}{0.45em}
\theoremstyle{definition}
\newtheorem{defn}{Definition}[section]
\newtheorem{prop}[defn]{Proposition}
\newtheorem{thm}[defn]{Theorem}

\newtheorem{lem}[defn]{Lemma}

\newtheorem*{prob}{Problem}
\newtheorem{mthm}{Theorem}
\newcommand\myeq{\mathrel{\stackrel{\makebox[0pt]{\mbox{\normalfont\tiny def}}}{=}}}
\newcommand\myiff{\mathrel{\stackrel{\makebox[0pt]{\mbox{\normalfont\tiny def}}}{\iff}}}
\def\CA{\mathsf{CA}}
\def\GAM{\mathsf{GAM^{\not=}}}
\def\LL{\mathcal{L}}
\def\d{\mathsf{Dr}}
\def\ds{\mathsf{Drs}}

\def\a#1{\mathfrak{#1}}
\def\f#1#2{\mathfrak{Fr}_{#1}\ds_{#2}}

\def\sub{\mathrm{sub}}
\def\color{\mathrm{color}}
\def\tg{\mathrm{tag}}
\def\ev{\mathrm{ev}}

\begin{document}
\title{First order logic without equality on relativized semantics}
\author{Amitayu Banerjee}
\address{Amitayu Banerjee\\ Department of Logic, Institute of Philosophy\\ E\"otv\"os Lor\'and University\\ Budapest, Hungary}
\email{banerjee.amitayu@gmail.com}
\author{Mohamed Khaled}
\address{Mohamed Khaled\\ Alfr\'ed R\'enyi Institute of Mathematics\\ Hungarian Academy of Sciences\\ Budapest, Hungary}
\email{khaled.mohamed@renyi.mta.hu}
\date{}
\subjclass[2010]{Primary 03G15, 03B45, 03C95. Secondary 03G25, 03C05, 08B20}
\keywords{free algebras, atoms, zero-dimensional elements}
\begin{abstract}
Let $\alpha\geq 2$ be any ordinal. We consider the class $\ds_{\alpha}$ of relativized diagonal free set algebras of dimension $\alpha$. With same technique, we prove several important results concerning this class. Among these results, we prove that almost all free algebras of $\ds_{\alpha}$ are atomless, and none of these free algebras contains zero-dimensional elements other than zero and top element. The class $\ds_{\alpha}$ corresponds to first order logic, without equality symbol, with $\alpha$-many variables and on relativized semantics. Hence, in this variation of first order logic, there is no finitely axiomatizable, complete and consistent theory.
\end{abstract}
\maketitle
\section{Introduction}
In the middle of the twentieth century, A. Tarski introduced and initiated the investigation of cylindric algebras and relation algebras. These algebras are Boolean algebras with extra additive, closure and complemented operators. The theories of these algebras are directly related to the development of some versions of quantifier logics, e.g.,, classical first order logic. These theories (and the theories of the related structures) have found interesting realizations and applications in mathematics, computer science, philosophy and logic, c.f., e.g.,, \cite{berghammer}, \cite{givant}, \cite{manca-salibra}, \cite{plotkin} and \cite{duntsch}.

An important notion in the theories of these algebras is the notion of representable algebras. These algebras can be conceived as expansions of Boolean set algebras whose elements are unary relations to algebras whose elements are relations of higher ranks. The question whether every abstract algebra is isomorphic to a representable algebra is the algebraic equivalent of the completeness theorem for the corresponding logic. Representable algebras represent the semantics of the corresponding logic, while abstract algebras correspond to its syntactical side.

One can find well motivated appropriate notions of representable structures by first locating them while giving up classical semantical prejudices. It is hard to give a precise mathematical underpinning to such intuitions. What really counts at the end is a completeness theorem stating a natural fit between chosen intuitive concrete-enough, but perhaps not excessively concrete, semantics and well behaved, hopefully recursive, axiomatization. G\"odel's completeness theorem ties just one choice of logical validity in standard set theoretic modeling.

The classical concrete algebras are cylindric set algebras defined by A. Tarski, these are algebras of sets of sequences in which the top element is a square of the form ${^{\alpha}U}$, where $U$ is a non-empty set and $\alpha$ is the dimension. Other concrete algebras can be the \emph{relativized} versions of cylindric set algebras. The top element of a relativized set algebra is arbitrary subset $V\subseteq {^{\alpha}U}$ with operations defined like cylindric set algebras, but relativized to $V$. From the modal perspective, such top elements are called \emph{guards} ‘‘guarding the semantics’’.

The notion of a relativized algebra has been introduced in algebraic logic by L. Henkin and I. N\'emeti. Relativization was proved extremely potent in obtaining positive results in both algebraic and modal logic, the slogan being \emph{relativization turns negative results positive}. For instance, I. N\'emeti proved, in a seminal result, that the universal theory of relativized cylindric set algebras is decidable. The corresponding multimodal logic exhibits nice modal behavior and is regarded as the base for proposing the so-called guarded fragments of first order logic by H. Andr\'eka, J. van Benthem and I. N\'emeti in \cite{andbennem}. 

The important connections between relativized cylindric set algebras and guarded fragments are discussed in \cite{andbennem} and \cite{benthem}. More liberal versions of guarded fragments are the so-called \emph{loosely guarded}, \emph{clique guarded} and \emph{packed fragments of first order logic}, see \cite{benthem} and \cite[Definitions 19.1, 19.2, 19.3, pp. 586-589]{HHbook}. Relativized algebras and their related logics attracted many logicians and were shown to have several desirable properties, especially concerning decidability and complexity issues. They are widely applied in various areas of computer science and linguistics (e.g.,, description logics, database theory, combining logics), see \cite{AMdNdR99}, \cite{BMP13}, \cite{GHKL14} and \cite{pratt}.

The structures of free cylindric algebras are quite rich since they are able to capture the whole of first order logic, in a sense. One of the first things to investigate about these free algebras is whether they are atomic or not, i.e.,, whether their Boolean reducts are atomic or not. By an atomic Boolean algebra, we mean an algebra for which below every non-zero element there is an atom, i.e.,, a minimal non-zero element. Atoms in these free algebras correspond to finitely axiomatizable complete and consistent theories, while the atomicity of these free algebras correspond to the failure of G\"odel's incompleteness theorem for the corresponding logics. For more details about this correspondence, see \cite{nem86}, \cite{myphd}, \cite{myigpl}, \cite{zalan} and \cite{andnem2013}.

For a class $\mathsf{K}$ of algebras, and a cardinal $\beta>0$, $\a{Fr}_{\beta}\mathsf{K}$ stands for the $\beta$-generated free $\mathsf{K}$ algebra. In particular, for any ordinal $\alpha$, the class of all cylindric algebras of dimension $\alpha$ is denoted by $\CA_{\alpha}$, thus $\a{Fr}_{\beta}\CA_{\alpha}$ denotes the $\beta$-generated free cylindric algebra of dimension $\alpha$. The following are known:

\begin{itemize}
\item If $\beta\geq \omega$, then $\a{Fr}_{\beta}\CA_{\alpha}$ is atomless (has no atoms). This result is due to D. Pigozzi \cite[2.5.13]{hmt1} and it can be generalized easily to any class of Boolean algebras with operators. So, from now on, let us assume that $\beta<\omega$.
\item If $\alpha<2$ then the free algebra $\a{Fr}_{\beta}\CA_{\alpha}$ is finite,
hence atomic \cite[2.5.3 (i)]{hmt1}. Moreover, the free algebra $\a{Fr}_{\beta}\CA_2$ is infinite but still atomic \cite[2.5.3(ii), 2.5.7(ii)]{hmt1}.
\item If $3\leq \alpha<\omega$, then $\a{Fr}_{\beta}\CA_{\alpha}$ has infinitely many atoms \cite[2.5.9]{hmt1}, and it was posed as an open question, cf \cite[Problem 4.14]{hmt2}, whether it is atomic or not.
\item In \cite{nem86}, it was shown that $\a{Fr}_{\beta}\CA_{\alpha}$ is not atomic for $3\leq \alpha<\omega$. This was proven by an involved metalogical machinery, namely, G\"odel's incompleteness Theorem. Then the problem of finding purely algebraic proof of this fact was raised in \cite[Problem 4.14]{hmt2}. Such a proof, for $\alpha\geq 4$, was found by I. N\'emeti \cite{nem84}. The problem of finding algebraic proof for the case $\alpha=3$ is still open.
\end{itemize}

Similar results concerning representable cylindric algebras are also obtained, c.f. \cite{nem84}. The question whether the finitely generated free relativized cylindric set algebras are atomic was a difficult problem that remained open for three decades. See \cite[Remark 18 (i)]{nem86}, \cite[Problem 38]{AMN91} and \cite[Problem 1.3.3]{CA4}. However, recently, it was shown that the free relativized cylindric set algebras are not atomic, but still they contain some atoms \cite{myphd}. Investigating the non-atomicity of free algebras in algebraic logic is an ongoing research project punctuated by many deep results and challenges. See section~\ref{newsection} for more details about the current status of this project.

In this paper, we consider diagonal-free versions of relativized cylindric set algebras $\ds_{\alpha}$. We prove that $\a{Fr}_{\beta}\ds_{\alpha}$ is atomless whenever $\alpha\geq 2$ and $\beta\geq 1$. Considering this in line with the results in \cite{myphd} gives us some information about the differences between guarded logics with identity and guarded logics without identity as a privileged logical symbol. The methods we use here are similar to the ones in \cite{myphd}, but applied in new directions. As a strength sign of our methods, we collect other important results too, e.g.,, the decidability of the equational theory of $\ds_{\alpha}$, which is proved by these methods.

Diagonal-free relativized cylindric set algebras correspond to first order logic without equality on general assignment models. We will discuss this correspondence and the applications of our results in section~\ref{newsection}. The interest for the study of languages without equality has its origin in the works of W. Blok and D. Pigozzi, \cite{pigozzi1}, \cite{pigozzi2} and \cite{pigozzi3}. Several developments and interesting results in this direction have been made, c.f., e.g.,, \cite{elg1}, \cite{dell1}, \cite{dell2}, \cite{dell3} and \cite{elg2} 
\section{Preliminaries and main results}
Recall the basic concepts of universal algebra from the literature, see, e.g.,, \cite{universal}. Let $\mathsf{K}$ be any class of algebras of the same similarity type, then $\mathbf{I}\mathsf{K}$, $\mathbf{S}\mathsf{K}$,  $\mathbf{P}\mathsf{K}$ and $\mathbf{H}\mathsf{K}$ are the classes that consist of the isomorphic copies, subalgebras, (isomorphic copies of) direct products and homomorphic images, respectively, of the members of $\mathsf{K}$. Let $X$ be any set, then $\a{Fr}_X\mathsf{K}$ is the free algebra of the class $\mathsf{K}$ generated by the free variables in $X$. Throughout this paper, we fix an ordinal $\alpha\geq 2$.

We start with the following basic notions. For every $i\in \alpha$ and every two sequences $f,g$ of length $\alpha$, we write $f\equiv_ig$ if and only if $g= f^u_i$ for some $u$, where $f^u_i$ is the sequence which is like $f$ except that it's value at $i$ equals $u$. Let $V$ be an arbitrary set of sequences of length $\alpha$. For each $i\in \alpha$, let $C_i^{[V]}$ be the mapping from $\mathcal{P}(V)$ into $\mathcal{P}(V)$ defined as follows: for any $X\subseteq V$, $$C_i^{[V]}X=\{f\in V:(\exists g\in X)f\equiv_ig\}.$$ This is called the $V$-cylindrification in the direction $i$. When no confusion is likely, we merely omit the superscript $[V]$ from the above defined object.

\begin{defn}The class of all \textbf{representable relativized diagonal free algebras of dimension $\alpha$}, denoted by $\ds_{\alpha}$, is defined to be the class that consists of all isomorphic copies of the subalgebras of the (full) algebras of the form, $$\a{P}(V)\myeq\langle \mathcal{P}(V), \cap,\cup,\setminus,\emptyset, V, C_i^{[V]}\rangle_{i\in \alpha},$$
where $V$ is a non-empty set of sequences of length $\alpha$ and $\mathcal{P}(V)$ is the family of all subsets of $V$. In other words, $\ds_\alpha=\mathbf{I}\mathbf{S}\{\a{P}(V): \emptyset\not=V\subseteq{^{\alpha}U}\text{ for some set }U\}$. For every $\a{A}\subseteq\a{P}(V)$, the set $V$ is called the unit of $\mathfrak{A}$, while the smallest set $U$ that satisfies $V\subseteq {^{\alpha}U}$ is called the base of $\mathfrak{A}$.
\end{defn}

\begin{prop}\label{variety}
The class $\ds_{\alpha}$ is a variety.
\end{prop}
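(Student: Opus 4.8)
My plan is to prove the proposition through the Birkhoff/HSP characterization: a class is a variety iff it is closed under homomorphic images, subalgebras and direct products. Since $\ds_{\alpha}=\mathbf{I}\mathbf{S}\{\mathfrak{P}(V)\}$ is defined as an $\mathbf{I}\mathbf{S}$-closure, it is immediately closed under $\mathbf{S}$ (and $\mathbf{I}$). The first real step is closure under $\mathbf{P}$. Given algebras $\mathfrak{A}_j\subseteq\mathfrak{P}(V_j)$ with $V_j\subseteq{}^{\alpha}U_j$, I would first replace each base by an isomorphic copy so that the $U_j$ are pairwise disjoint, set $U=\bigcup_j U_j$ and $V=\bigcup_j V_j\subseteq{}^{\alpha}U$, and check that $(X_j)_j\mapsto\bigcup_j X_j$ is an isomorphism from $\prod_j\mathfrak{A}_j$ onto a subalgebra of $\mathfrak{P}(V)$. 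The only nonroutine point is that this map commutes with the cylindrifications, and this is exactly where the hypothesis $\alpha\ge2$ is used: if $f\in V_j$ and $f\equiv_i g$ with $g\in V$, then $f$ and $g$ agree on some coordinate $k\ne i$ (such $k$ exists because $\alpha\ge2$), whose common value lies in $U_j$, so disjointness of the bases forces $g\in V_j$; hence $C_i^{[V]}$ acts blockwise as $C_i^{[V_j]}$ and $V$ is a disjoint union of the units.

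The remaining and genuinely hard part is closure under $\mathbf{H}$, which I would obtain by identifying $\ds_{\alpha}$ with an equationally defined class and proving a representation theorem. Concretely, let $\Sigma$ be the finite set of equations asserting that the Boolean reduct is a Boolean algebra and that each $C_i$ is a normal, additive operator obeying the S5-equations $x\le C_i x$, $C_iC_i x=C_i x$ and $x\cdot C_i(-C_i x)=0$, with no equation relating distinct $C_i,C_j$. Soundness, i.e. $\ds_{\alpha}\subseteq\mathrm{Mod}(\Sigma)$, is a direct verification on the full algebras $\mathfrak{P}(V)$ (each $\equiv_i$ is an equivalence relation, so $C_i^{[V]}$ is an S5-type closure operator; relativization only destroys the commutativity $C_iC_j=C_jC_i$, which is not among the axioms), together with the fact that equations pass to $\mathbf{I}$ and $\mathbf{S}$. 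The content is the converse inclusion $\mathrm{Mod}(\Sigma)\subseteq\ds_{\alpha}$: every model of $\Sigma$ is representable.

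For the representation I would use a subdirect argument combined with the product-closure already established: it suffices to produce, for each $\mathfrak{A}\models\Sigma$ and each nonzero $a\in\mathfrak{A}$, a homomorphism $h_a\colon\mathfrak{A}\to\mathfrak{P}(V_a)$ with $h_a(a)\ne\emptyset$, for then the induced map into $\prod_a\mathfrak{P}(V_a)\in\ds_{\alpha}$ is an embedding. To build $h_a$, pick an ultrafilter $w_0\ni a$ and pass to the canonical frame $(W,(R_i)_{i<\alpha})$ of $\mathfrak{A}$; by the S5-equations in $\Sigma$ each $R_i$ is an equivalence relation, and $\mathfrak{A}$ embeds into the complex algebra $\mathfrak{P}(W)$ with operators $X\mapsto R_i[X]$. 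The key geometric step is then to \emph{unravel} this abstract frame into a genuine set of sequences: I would construct a unit $V_a\subseteq{}^{\alpha}U_a$ and a surjective bounded morphism $p\colon(V_a,(\equiv_i))\to(W,(R_i))$, so that $p^{+}\colon\mathfrak{P}(W)\to\mathfrak{P}(V_a)$ is an embedding and $h_a$ is the composite.

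I expect the unraveling to be the main obstacle. The difficulty is to realize simultaneously all the abstract equivalence relations $R_i$ as the concrete agree-off-$i$ relations $\equiv_i$ on one relativized set of $\alpha$-sequences: assigning a single coordinate to record one $R_i$-class does not work once $\alpha\ge3$, because $\equiv_i$ is a conjunction over the $\alpha-1$ coordinates $\ne i$. I would instead build $V_a$ step by step as a tree of sequences rooted at $w_0$, at each stage adding, for every point $R_i$-related to an already-placed one, a new sequence that agrees with it off coordinate $i$ and carries a fresh value at $i$, and let $p$ read off the point each sequence was created for. Verifying that $p$ is a surjective bounded morphism reduces to the back-and-forth conditions for each $R_i$ separately, and here the diagonal-free, relativized setting is decisive: because $\Sigma$ imposes no interaction between distinct $C_i$ and $C_j$, the relations can be unraveled independently, and the arbitrary unit $V_a$ permitted by relativization absorbs the resulting tree without forcing any unwanted identifications. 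Granting this lemma, $\ds_{\alpha}=\mathrm{Mod}(\Sigma)$ is a variety.
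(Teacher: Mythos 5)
Your treatment of $\mathbf{S}$ and $\mathbf{P}$ coincides with the paper's: the paper also takes disjoint bases and shows $\a{P}(V_1)\times\a{P}(V_2)\cong\a{P}(V_1\cup V_2)$ via $(X,Y)\mapsto X\cup Y$, with disjointness of the bases doing exactly the work you describe for the cylindrifications. Where you diverge sharply is closure under $\mathbf{H}$. The paper handles this with a short, direct argument: given $\a{A}\subseteq\a{P}(V)$ and a congruence $\Theta$, it sets $V'=V\setminus\bigcup[\emptyset]$ (deleting the points covered by the congruence class of $\emptyset$) and checks that $[X]\mapsto(\bigcup[X])\cap V'$ is an injective homomorphism into $\a{P}(V')$ — no axiomatization and no representation theorem are needed. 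You instead route $\mathbf{H}$-closure through the identity $\ds_{\alpha}=\mathrm{Mod}(\Sigma)$, i.e.\ through a full representation theorem for the abstractly axiomatized class. That statement is true — it is precisely the paper's Theorem~\ref{main1}(1), the equality $\ds_{\alpha}=\d_{\alpha}$, which the paper proves only later via normal forms and the step-by-step construction of Section~4 — so your argument is sound in outline, but it proves a much stronger theorem than the proposition requires, and its load-bearing step (the unraveling of the canonical frame into a set of $\alpha$-sequences by a surjective bounded morphism) is left as a sketch. That sketch is essentially correct (fresh coordinate values prevent accidental $\equiv_i$-identifications once $\alpha\ge 2$, and the S5 conditions on each $R_i$ give the forth condition along $i$-chains), and it is a transfinite cousin of the finite tree the paper builds for Lemma~\ref{7abibi}; but as written it is the one unproved lemma your proof rests on. If you want a self-contained proof of just this proposition, the paper's quotient argument is the economical choice; your route, once the unraveling is written out, yields the axiomatizability result as a bonus.
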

\begin{proof}[Sketch of the proof]
We need to show that $\ds_{\alpha}$ is closed under $\mathbf{S}$, $\mathbf{P}$ and $\mathbf{H}$. By definition, it is clear that $\ds_{\alpha}$ is closed under forming subalgebras $\mathbf{S}$.

{\bf{$\ds_{\alpha}$ is closed under $\mathbf{P}$:}} 
Let $\a{A}$ and $\a{B}$ be two algebras in $\ds_{\alpha}$. We show that their direct product $\a{A}\times\a{B}\in\ds_{\alpha}$. The same method can be applied to show that the direct product of any set of algebras in $\ds_{\alpha}$ is an element of $\ds_{\alpha}$. By definition, there are two non-empty sets (of sequences of length $\alpha$) $V_1$ and $V_2$ such that $\a{A}$ and $\a{B}$ are isomorphic to subalgebras of the full algebras $\a{P}(V_1)$ and $\a{P}(V_2)$, respectively. Let $U_1$ and $U_2$ be the bases of $\a{P}(V_1)$ and $\a{P}(V_2)$, respectively. We may assume that $U_1\cap U_2=\emptyset$. Now, we need to show that $\a{P}(V_1)\times\a{P}(V_2)\cong\a{P}(V_1\cup V_2)$.

Define the map $\psi:\mathcal{P}(V_1)\times\mathcal{P}(V_2)\rightarrow\mathcal{P}(V_1\cup V_2)$ as follows. For each $X\subseteq V_1$ and each $Y\subseteq V_2$, let $\psi(X,Y)=X\cup Y$. It is not hard to see that $\psi$ is a homomorphism because $U_1\cap U_2=\emptyset$. 
It remains to prove that $\psi$ is an injection. It is enough to show that the kernel of $\psi$ is $\{\emptyset\}$, which is clear by the definition of $\psi$. Therefore, $\a{A}\times\a{B}$ is isomorphic to a subalgebra of $\a{P}(V_1\cup V_2)$.

{\bf{$\ds_{\alpha}$ is closed under $\mathbf{H}$:}} By the first homomorphism theorem, we know that every homomorphic image of an algebra is isomorphic to a quotient of this algebra. Thus, it is enough to prove that every quotient algebra of a member of $\ds_{\alpha}$ is a member of $\ds_{\alpha}$. Suppose that $\a{A}$ is a subalgebra of $\a{P}(V)$ for some non-empty set $V$ (of sequences of length $\alpha$). Suppose that $\Theta$ is a congruence relation on $\a{A}$. For every $X\in A$, let $[X]=\{Y\in A:(X,Y)\in\Theta\}$ and let $$\bigcup[X]=\{y\in V:y\in Y\text{ for some }Y\in[X]\}.$$ Let $V'=V\setminus (\bigcup[\emptyset])$. We prove that $\a{A}/\Theta$ is embeddable into the full algebra $\a{P}(V')$. Define $\psi:A/\Theta\rightarrow\mathcal{P}(V')$ as follows. For each $X\in A$, let $\psi([X])=(\bigcup[X])\cap V'$. The fact that $\Theta$ is a congruence on $\a{A}$ implies that $\psi$ is an injective homomorphism.
%
Therefore, $\a{A}/\Theta\in\ds_{\alpha}$.
\end{proof}

We assume familiarity with the basic notions of the theory of cylindric algebras, e.g., atoms, zero-dimensional elements, etc. The definitions of such notions can be found in \cite{hmt1} and/or \cite{hmt2}. We shall mention that several general theorems from literature can be applied to obtain results concerning $\ds_{\alpha}$. For example, Theorem~\ref{main1} below (at least for the case when $\alpha$ is finite) may follow as a consequence of \cite[Theorem 9.4]{monk}, \cite[Theorem 4.2]{nem86} and \cite[Theorem 5.3.5]{marx}. 

In the present paper, we give direct proofs of these facts (for finite and infinite $\alpha$'s). Our technique also leads to some new important results, see Theorem~\ref{main2} and Theorem~\ref{main3}.
\begin{mthm}\label{main1}
The variety $\ds_{\alpha}$ enjoys each of the following:
\begin{enumerate}[(1)]
\item Finite schema axiomatizability (finite axiomatizability if $\alpha$ is finite).
\item Finite base property, i.e.,, generated by its algebras whose base is finite.
\item Decidable equational theory.
\item Generated by its locally finite dimensional algebras, for the case when $\alpha$ is infinite.
\item Super amalgamation property.
\end{enumerate}
\end{mthm}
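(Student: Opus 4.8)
The plan is to route all five items through a single \emph{representation theorem}: that $\ds_{\alpha}$ coincides with the variety axiomatized by the finite schema $\Sigma$ consisting of the Boolean axioms together with, for each $i\in\alpha$, the four equations making $C_i$ a normal additive closure operator on the Boolean order, namely $C_i 0 = 0$, $x\leq C_i x$, $C_i(x+y)=C_i x + C_i y$ and $C_i C_i x = C_i x$. The soundness half, that every $\a{P}(V)$ satisfies $\Sigma$, is an immediate verification from the definition of $C_i^{[V]}$ using that $\equiv_i$ is an equivalence relation; the crucial point is that the \emph{commutativity} $C_i C_j x = C_j C_i x$ that holds in the full cube is deliberately \emph{not} among the axioms, since relativization to an arbitrary unit $V$ blocks the intermediate sequences the commutation argument would require. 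Item (1) is exactly this identification of $\ds_{\alpha}$ with $\mathbf{Mod}(\Sigma)$: $\Sigma$ is a finite set of schemes, and for finite $\alpha$ it is literally finite.

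The heart of the argument is the completeness half: every $\a{A}\models\Sigma$ embeds into some $\a{P}(V)$. Here I would run a step-by-step (chase / back-and-forth) construction. Taking the ultrafilters of the Boolean reduct of $\a{A}$ as candidate base points, I would build in stages a set $V$ of $\alpha$-sequences together with an evaluation $x\mapsto\hat{x}=\{f\in V : x\in\tg(f)\}$ under a defect-repair discipline: whenever a point $f$ and an element with $C_i x\in\tg(f)$ demand an $i$-witness, adjoin a fresh sequence $f'\equiv_i f$ whose tag is an ultrafilter containing $x$. Because each $C_i$ is merely a closure operator and no confluence between distinct directions $i\neq j$ is required, every defect is repaired independently; this is precisely where the absence of diagonals and of commutativity makes the construction terminate without obstruction, and it is the step I expect to carry the real content.

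Items (2) and (4) I would obtain by \emph{bounding} this construction. A failure of an equation $\tau=\sigma$ is witnessed at a single point, and a term has finite depth and mentions only finitely many indices, so, tracing back through the repair steps, only finitely many base points and only finitely many coordinates are ever touched. For finite $\alpha$ this yields a witnessing $\a{P}(V)$ of finite base, with $V\subseteq{}^{\alpha}U$ for a finite $U$, giving the finite base property (2); for infinite $\alpha$, the same locality forces each generated element to have finite dimension set, so the witnessing algebra is locally finite dimensional, giving (4). Item (3) then follows from the standard two-sided recursive-enumerability argument: by (1) the valid equations are recursively enumerable (enumerate $\Sigma$-proofs), while by (2) and (4) a non-valid equation already fails in an effectively presentable finite-base (equivalently, finitely supported) algebra, so the non-valid equations are recursively enumerable as well, whence the equational theory is decidable.

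For the super amalgamation property (5) I would give a direct set-theoretic amalgam. Given embeddings $\a{A}\hookrightarrow\a{B}$ and $\a{A}\hookrightarrow\a{C}$, first fix a representation of $\a{A}$ on a unit $V_A$; the crux is an \emph{extension-of-representations} lemma, that the step-by-step construction can be initialized from a given representation of a subalgebra and driven to a full representation of the overalgebra, which lets me extend the representation of $\a{A}$ to representations of $\a{B}$ and $\a{C}$ on units $V_B,V_C$ agreeing on the common part. I then amalgamate the units over $V_A$, setting $V_D=V_B\cup_{V_A}V_C$ with disjoint bases off the shared part, and represent the amalgam inside $\a{P}(V_D)$; the gluing is consistent precisely because there are no diagonal or commutativity constraints to reconcile. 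Finally the super condition, that $\hat{b}\leq\hat{c}$ in $\a{P}(V_D)$ forces some $a\in\a{A}$ with $b\leq a\leq c$, is read off by separating $b\not\leq a$ from $a\not\leq c$ with witnessing points of $V_B$ and $V_C$ that survive into $V_D$. I expect (5), and specifically the simultaneous extension form of the representation on which it rests, to be the main obstacle.
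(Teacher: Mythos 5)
Your axiom schema $\Sigma$ is too weak, and this breaks the central representation claim on which everything else rests. You axiomatize each $C_i$ merely as a normal, additive, increasing, idempotent operator; but in every $\a{P}(V)$ the operator $C_i^{[V]}$ is the closure under the restriction of the \emph{equivalence relation} $\equiv_i$ to $V$, so it additionally satisfies the quantifier identity $c_i(x\cdot c_i y)=c_i x\cdot c_i y$ (the paper's (Ax~1c)), which encodes symmetry of the accessibility relation and is \emph{not} derivable from your four equations. Concretely, the complex algebra of the three-point frame $\{a,b,c\}$ with accessibility the reflexive--transitive closure of $a\to b$ satisfies your $\Sigma$ but violates $c(x\cdot cy)=cx\cdot cy$ (take $x=\{b\}$, $y=\{a\}$), so it cannot embed into any $\a{P}(V)$. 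Hence ``every $\a{A}\models\Sigma$ embeds into some $\a{P}(V)$'' is false as stated, and item (1) fails with your $\Sigma$. The fix is to add (Ax~1c); but note that the commutativity you single out as the deliberately omitted axiom is not the only thing at stake.

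Even with the corrected axioms, your step-by-step construction from ultrafilters does not deliver items (2) and (4) as described: each freshly adjoined witness point carries its own stock of demands $C_i x\in\tg(f')$, so the repair process does not terminate, and ``only finitely many base points are ever touched'' is exactly the claim that needs proof. The paper's way around this is to work not with ultrafilters but with the normal forms $F_k(X;n)$ of bounded degree $k$: a nonzero term lies above a nonzero normal form of some finite degree, and the witnessing unit $V$ is built in exactly $k$ rounds, with the labels dropping one degree per round, so the construction provably stops with a finite base and touches only the finitely many coordinates $n$ (whence local finite-dimensionality for infinite $\alpha$). Moreover the paper never represents arbitrary abstract algebras at all: it shows $\d_\alpha$ and $\ds_\alpha$ have the same equational theory via these finite witnesses, deduces that their free algebras coincide, and then gets $\d_\alpha\subseteq\ds_\alpha$ from the universal mapping property plus closure of $\ds_\alpha$ under $\mathbf{H}$ --- a cheaper route than your full representation theorem. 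Finally, for (5) the paper does not build an amalgam by hand; it observes that $\ds_\alpha$ is axiomatized by positive equations, hence is a canonical variety, and invokes Marx's theorem. Your direct gluing of units rests on an unproved ``extension-of-representations'' lemma and would need substantial work to close.
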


\begin{mthm}\label{main2}
Let $X$ be any set, we have the following:
\begin{itemize}
\item If $X=\emptyset$ then $\f{X}{\alpha}$ is a two-element algebra, hence it is atomic.
\item If $X\not=\emptyset$ then the free algebra $\f{X}{\alpha}$ is atomless.
\end{itemize}
\end{mthm}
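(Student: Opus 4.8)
The plan is to dispatch the two bullet points separately: the first is immediate, and the second I reduce to a \emph{satisfiability-splitting} statement that I then settle by a local surgery on a concrete witness.

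For $X=\emptyset$ the free algebra is the closed-term algebra, built from the two constants $\emptyset$ and the unit using the Boolean operations and the $C_i$'s. Since $C_i\emptyset=\emptyset$ and $C_iV=V$ hold in every $\a{P}(V)$ (for the latter, $f\equiv_i f$), the closed terms collapse to the two values $0$ and $1$; as any nondegenerate member of $\ds_\alpha$ separates them, $\f{\emptyset}{\alpha}$ is exactly the two-element Boolean algebra with operators, whose single nonzero element is an atom, so it is atomic. Now fix $X\neq\emptyset$ and write elements of $\f{X}{\alpha}$ as terms $\tau$ in finitely many generators. I use the standard dictionary for free algebras of a variety: $\tau$ denotes $0$ iff $\tau$ is unsatisfiable in $\ds_\alpha$, and $\sigma\le\tau$ holds iff $\sigma\wedge\neg\tau$ is unsatisfiable. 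Hence atomlessness follows once I show that for every satisfiable $\tau$ there is a term $\rho$ with both $\tau\wedge\rho$ and $\tau\wedge\neg\rho$ satisfiable, since then $\sigma:=\tau\wedge\rho$ obeys $0<\sigma<\tau$.

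When some generator $y\in X$ does not occur in $\tau$ — in particular whenever $X$ is infinite — this is trivial with $\rho:=y$: a witness of $\tau$ with $y$ interpreted as the full unit yields $\tau\wedge y$, and the same witness with $y$ interpreted as $\emptyset$ yields $\tau\wedge\neg y$, neither interpretation disturbing the value of $\tau$. The substantial case is therefore $X$ finite with every generator occurring in $\tau$. Here I would argue geometrically. By the finite base property (Theorem~\ref{main1}(2)) I may take the witness to be a full algebra $\a{P}(V)$ with finite base, an assignment $\bar a$ of the generators, and a sequence $s\in\tau[\bar a]$. Reading the $C_i$ as the diamonds of a multimodal frame whose accessibility relations are the equivalences $\equiv_i$, the value of $\tau$ at $s$ depends only on the restriction of the model to the ball of radius $d$ around $s$, where $d$ is the nesting depth of $\tau$; equivalently, $\tau[s]$ is invariant under bisimulations agreeing up to depth $d$. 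I would then unravel the witness into a tree-like representable model of depth $d$ rooted at $s$ — staying inside $\ds_\alpha$, which is precisely what relativized semantics buys us — and attach to a leaf one fresh world at distance $d+1$ from $s$, carrying a distinguishing colour and introduced through a fresh coordinate value so that it creates no new short connections. Because the alteration lies strictly beyond radius $d$, it leaves $\tau[s]$ untouched, while a probe $\rho$ of depth $d+1$, a chain $C_{i_1}\cdots C_{i_{d+1}}$ running along the new branch and testing the colour, is true at $s$ in the extended model and false in the unaltered one. This supplies the two required witnesses and completes the splitting.

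The main obstacle is exactly this last surgery, which must achieve three things at once: (i) keep the depth-$d$ neighbourhood of $s$ bisimilar to the original so that $\tau[s]$ is preserved; (ii) prevent the deep probe $\rho$ from being satisfied accidentally already within radius $d$, which forces $\rho$ and the attached branch to be designed together on the tree-unravelling; and (iii) ensure that every intermediate structure is a genuine member of $\ds_\alpha$, that is, representable by actual $\alpha$-sequences with the $\equiv_i$ being the correct agree-off-$i$ relations. Verifying (iii) — that the unravelling and the fresh attachment can be realized by sequences over a finite base — is where the axioms underlying Theorem~\ref{main1} and the flexibility of relativized semantics do the real work, and I expect it to be the technically delicate heart of the argument.
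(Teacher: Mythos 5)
Your overall strategy coincides with the paper's: reduce atomlessness to a splitting statement for satisfiable terms, dispose of the case of an unused generator by the two-homomorphism (Pigozzi) trick, and, in the remaining case, realize a satisfiable term in a finite tree-shaped unit, graft one new point just beyond the ``horizon'' visible to the term, and separate the two resulting models by a deeper formula. The two reductions and your obstacle (i) are fine. But the argument is not complete, because the step you yourself flag under (ii) --- producing a probe $\rho$ of depth $d+1$ that is true at $s$ in the extended model and \emph{false} at $s$ in the original one --- is exactly where the content of the theorem sits, and a bare chain $C_{i_1}\cdots C_{i_{d+1}}(\text{colour})$ cannot do the job. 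The only ``colours'' available are Boolean combinations of the generators already occurring in $\tau$ (for $X=\{x\}$ there are just two), so the original unit will in general contain many points of the distinguishing colour reachable from $s$ by chains of length $d+1$, and the naive probe is accidentally satisfied already in the unaltered model. Contrary to your closing remark, (iii) is the routine part (the canonical witness of Section 4 is already built from $\alpha$-sequences over a finite base, using fresh base elements for each new node); (ii) is the delicate heart.

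The paper closes this gap with the normal-form machinery of Theorem~\ref{andreka}. One first replaces the given term by a normal form $\tau\in F_k(X;n)$ below it, so that the witnessing unit $V$ is the canonical tree and no unravelling is needed. Lemma~\ref{seq} then extracts a root-to-leaf path $\bar v_k,\dots,\bar v_0$ along which the modal directions alternate; the new point is attached to the leaf $\bar v_0$ in a direction ($\equiv_0$) in which $\bar v_0$ has no other neighbours, with a degree-$0$ label $\varsigma\neq\tg(\bar v_0)$ (this is the only place $X\neq\emptyset$ is used). The ``probe'' is not a chain but a maximally specific formula: the degree-$(h+1)$ normal forms $\sigma_{h+1}$ and $\gamma_{h+1}$ satisfied at $\bar v_h$ in the old and new models are shown to be disjoint by induction \emph{along the path}, using that normal forms of a fixed degree partition the unit and that every $\equiv_i$-neighbour of $\bar v_{h+1}$ other than $\bar v_h$ carries a label already disjoint from that of $\bar v_h$. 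At the root this yields two disjoint satisfiable degree-$(k+1)$ forms below $\tau$, which is your splitting. Until you specify the probe as such a Hintikka-style normal form and run this level-by-level disjointness induction, the proof has a hole precisely at its centre.
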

\begin{mthm}\label{main3}
Let $X$ be any set. The only zero-dimensional elements in the free algebra $\f{X}{\alpha}$ are the zero and the unit.
\end{mthm}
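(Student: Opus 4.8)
The plan is to prove that if $\mathfrak{z}\in\f{X}{\alpha}$ is zero-dimensional, meaning $C_i^{[V]}\mathfrak{z}=\mathfrak{z}$ for every $i\in\alpha$, then $\mathfrak{z}\in\{0,1\}$. The natural strategy is to use a concrete representation of the free algebra together with the \emph{finite base property} from Theorem~\ref{main1}(2): since $\ds_\alpha$ is generated by its algebras of finite base, the free algebra embeds into a product of algebras $\a{P}(V)$ with $V\subseteq{}^{\alpha}U$ for finite $U$. A zero-dimensional element maps to a zero-dimensional element in each factor, and an element is $0$ or $1$ in the free algebra iff it is $0$ or $1$ in every factor (using that the embedding separates points and that $\f{X}{\alpha}$ is generated by independent free generators). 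So it suffices to work inside a single full set algebra $\a{P}(V)$ over a finite base and understand which \emph{definable} (i.e.\ in the subalgebra generated by the images of the free generators) elements are both zero-dimensional and nontrivial.

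The key step is to analyze zero-dimensionality directly in $\a{P}(V)$. If $Z\subseteq V$ satisfies $C_i^{[V]}Z=Z$ for all $i$, then $Z$ is closed under the equivalence generated by all the $\equiv_i$ relativized to $V$: whenever $f\in Z$ and $g\in V$ differs from $f$ in a single coordinate, then $g\in Z$; iterating, $Z$ is a union of connected components of the graph on $V$ whose edges connect sequences differing in one coordinate (and lying in $V$). Thus the zero-dimensional elements of $\a{P}(V)$ are exactly the unions of these connected components. The crux is then to show that in the specific unit $V$ underlying the free algebra, the generators sit inside a \emph{single} connected component, forcing any zero-dimensional subalgebra element other than $\emptyset$ and $V$ to be impossible; equivalently, the relevant $V$ is connected under the one-coordinate-change relation, so the only zero-dimensional sets are $\emptyset$ and $V$.

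I would carry this out in the following order. First, recall or reconstruct the explicit description of the unit $V$ of a finite-base representation of $\f{X}{\alpha}$ coming from the construction used for Theorem~\ref{main2} (the same machinery that produces the atomless representation). Second, prove the graph-theoretic lemma that in $\a{P}(V)$ the zero-dimensional elements coincide with the unions of connected components of the one-step-change graph. Third, establish that for the distinguished unit $V$ the generating sets, and hence all elements in the image of the free generators, cannot be separated by a nontrivial clopen union of components — most cleanly by showing $V$ itself is connected, so that any set closed under all $C_i^{[V]}$ is $\emptyset$ or $V$. Finally, transfer back: a zero-dimensional $\mathfrak{z}$ in $\f{X}{\alpha}$ has each component-projection equal to $0$ or $1$, and a separate argument (again via the free-generation/independence of $X$, or via the subdirect embedding being nontrivial on every factor) forces these to agree across factors, yielding $\mathfrak{z}\in\{0,1\}$.

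The main obstacle I anticipate is the connectivity/transfer step, not the local graph lemma. Showing that $V$ is connected under the one-coordinate-change relation depends delicately on how the free generators are coded into the base, and one must rule out a degenerate situation where $V$ splits into several components on which a generator image is constant. The safest route is to choose (or normalize) the representation so that the coding sequences form a single connected piece, which the construction behind Theorem~\ref{main2} should already supply; verifying this carefully, and then checking that zero-dimensionality is genuinely preserved and reflected by the embedding into the product of finite-base factors, is where the real work lies.
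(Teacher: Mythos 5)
Your local analysis is correct and in fact matches the mechanism the paper exploits: in a full algebra $\a{P}(V)$ the zero-dimensional elements are exactly the unions of connected components of the graph on $V$ whose edges are one-coordinate changes inside $V$, and the paper's concluding contradiction $\ds_{\alpha}\not\models -t\cdot c_{i_0}\cdots c_{i_{l-1}}t=0$ is precisely the assertion that some point satisfying $-t$ is joined by such a path to a point satisfying $t$. The genuine gap is in your transfer step, and it is not a routine verification. The finite base property gives an embedding of $\f{X}{\alpha}$ into a \emph{product} of finite-base algebras, and even if every factor's unit is connected, a zero-dimensional element of the subalgebra may evaluate to the full unit in some factors and to $\emptyset$ in others; this is perfectly consistent and yields no contradiction. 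Neither of your proposed repairs closes this: the embedding being nontrivial on every factor does not prevent $t$ from being $1$ in the factor witnessing $\tau\leq t$ and $0$ in the factor witnessing $\sigma\leq -t$, and independence of the free generators says nothing about agreement across factors. Likewise, the unit built for Theorem~\ref{main2} witnesses the satisfiability of a \emph{single} normal form; the free algebra does not embed into that one $\a{P}(V)$, so its connectedness does not by itself decide the value of $t$ there in a way that reflects back to $\f{X}{\alpha}$.

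What is actually needed, and what the paper supplies, is a single unit in which a point satisfying some nonzero $\tau\leq t$ and a point satisfying some nonzero $\sigma\leq -t$ lie in the same connected component, together with a verification that the evaluation on this glued unit still realizes both labels. The paper takes the two witnessing units $V^{\tau}$ and $V^{\sigma}$ with a common tail and disjoint actual bases, joins $\bar{v}_0^{\tau}$ to $\bar{v}_0^{\sigma}$ by inserting the intermediate sequences $\bar{w}_1,\ldots,\bar{w}_{n-1}$ obtained by changing one coordinate at a time, and then re-runs the argument of Lemma~\ref{7abibi} to check that every old point still satisfies its tag after the new points are added (adding points alters the cylindrifications, so this preservation is not automatic and is exactly the delicate point you flagged). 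Only then does connectivity give $\ds_{\alpha}\not\models -t\cdot c_{i_0}\cdots c_{i_{l-1}}t=0$, contradicting the zero-dimensionality of $t$. Until you either carry out such a gluing or prove that $\f{X}{\alpha}$ embeds into a single $\a{P}(V)$ with $V$ connected (a strictly stronger claim you have not established), the proof is incomplete.
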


To prove the above theorems, we use the normal forms defined in \cite{mysh}, these are generalizations of the normal forms introduced by J. Hintikka \cite{hintikka}. We show that, for each satisfiable normal form in $\ds_{\alpha}$, there is an algebra in $\ds_{\alpha}$, whose base is finite, that witnesses the satisfiability of this form. 
\section{Axioms and normal forms}
Here, we give an equational characterization for the class $\ds_{\alpha}$. We get this characterization by deleting the cylindrifiers-commutativity axiom from the axioms defining the diagonal free cylindric algebras defined by A. Tarski \cite[Definition 1.1.2]{hmt1}.
\begin{defn}Let $\d_{\alpha}$ be the class of all \textbf{relativized diagonal free algebras of dimension $\alpha$}, i.e., the class consists of all algebras $\a{A}=\langle A, \cdot, +,-,0,1,c_i\rangle_{i\in \alpha}$, that satisfy the following equations for every $i\in \alpha$.
\begin{enumerate}[(Ax 0)]
\item[(Ax 0)] The set of equations characterizing Boolean algebras for $\cdot,+,-,0,1$.
\item[(Ax 1)] The set of equations defining $c_i$ as an additive, closure and complemented operator:
\begin{enumerate}
\item[(Ax 1a)] $c_{i}0=0$.
\item[(Ax 1b)] $x+c_{i}x=c_{i}x$.
\item[(Ax 1c)] $c_{i}(x\cdot c_iy)=c_{i}x\cdot c_{i}y$.
\end{enumerate}
\end{enumerate}
\end{defn}
Note that $\ds_{\alpha}\subseteq\d_{\alpha}$. It is easy to check that each $\a{A}\in\ds_{\alpha}$ satisfies the above axioms. Later, we will prove that the above is actually a characterization of the class $\ds_{\alpha}$, i.e., $\ds_{\alpha}=\d_{\alpha}$. Let $X$ be any set of variables, then $T_{\alpha}(X)$ is defined to be the set of all terms in the signature of $\d_{\alpha}$ that are built up from variables in $X$.

Now, we define normal forms in the signature of $\d_{\alpha}$. Then, we will show that each term in this signature can be rewritten equivalently as a Boolean joint of these normal forms. Let $\prod$ and $\sum$ be the grouped versions of $\cdot$ and $+$ respectively. Empty product and empty sum are defined to be $1$ and $0$ respectively. Let $X$ be a set of variables and let $T\subseteq T_{\alpha}(X)$ be a finite set of terms. 
Let $n\in\alpha+1$ be a finite ordinal and let $\beta\in{^T\{-1,1\}}$. Define 
$$C_n(T)\myeq\{c_i\tau:\tau\in T,i\in n\} \ \  \text{ and } \ \ T^{\beta}\myeq\prod\{\tau^{\beta}:\tau\in T\},$$
where, for every $\tau\in T$, $\tau^{\beta}=\tau$ if $\beta(\tau)=1$ and $\tau^{\beta}=-\tau$ otherwise.
%
\begin{defn}\label{forms}
Let $X$ be a finite set and let $n\in\alpha+1$ be finite ordinal such that $n\geq 2$. Let $k\in\omega$, we define the following inductively.
\begin{enumerate}
\item[-] Normal forms of degree $0$: $F_{0}(X;n)\myeq\{X^{\beta}:\beta\in{^{X}\{-1,1\}}\}$.
\item[-] Normal forms of degree $k+1$: \begin{center}$F_{k+1}(X;n)\myeq\{X^{\beta}\cdot (C_n(F_k(X;n)))^{\alpha}:\beta\in{^{X}\{-1,1\}}\text{ and }\alpha\in{^{C_n(F_k(X;n))}\{-1,1\}}\}$.\end{center}
\item[-] All normal forms, $F(X;n)\myeq\bigcup_{k\in\omega} F_k(X;n)$.
\end{enumerate}
\end{defn}
The prove of the following theorem can be found in \cite[Lemma 4.9 and Theorem 4.10]{mysh}. 
\begin{thm}\label{andreka}Let $k\in\omega$ and $n\in\alpha+1$ be finite ordinals such that $n\geq 2$. Let $X$ be a finite set of variables. Then the following are true:
\begin{enumerate}[(i)]
\item\label{andc1} $\d_{\alpha}\models\sum F_k(X;n)=1$.
\item\label{andc2} For every $\tau,\sigma\in F_k(X;n)$, if $\tau\not=\sigma$ then $\d_{\alpha}\models\tau\cdot \sigma=0$. 
\item\label{andc4} Let $\tau\in T_n(X)$ \footnote{Again, $T_n(X)$ is the set of all terms in the signature of $\d_n$ that are built up from variables in $X$.} be such that $\d_{\alpha}\not\models\tau=0$. Then there is a finite ordinal $q\in\omega$ and a non-empty finite set $S\subseteq F_q(X;n)$ of normal forms of degree $q$ such that $\d_\alpha\models\tau=\sum S$.
\end{enumerate}
\end{thm}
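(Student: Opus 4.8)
My plan is to prove the three assertions separately, since (i) and (ii) are elementary while (iii) carries all the content. Parts (i) and (ii) are purely Boolean facts about the complete conjunctions $X^{\beta}$ and $(C_n(F_k(X;n)))^{\gamma}$ over the \emph{finite} generating sets involved (finiteness comes from $X$ and $n$ being finite, so each $F_k(X;n)$ and each $C_n(F_k(X;n))$ is finite); hence they hold in every Boolean algebra and a fortiori in $\d_{\alpha}$. For (ii), if $\tau,\sigma\in F_k(X;n)$ are distinct then their defining sign data disagree on some literal $g$ among the generators $X\cup C_n(F_{k-1}(X;n))$, so $g$ occurs positively in one and negatively in the other and $\tau\cdot\sigma\leq g\cdot(-g)=0$. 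For (i), I would use distributivity to factor, for each $k$,
\[ \sum F_{k+1}(X;n)=\Bigl(\sum_{\beta} X^{\beta}\Bigr)\cdot\Bigl(\sum_{\gamma}(C_n(F_k(X;n)))^{\gamma}\Bigr)=1\cdot 1=1, \]
each factor being the sum of all complete conjunctions over a finite set, which is $1$; the degree-$0$ case $\sum_{\beta}X^{\beta}=1$ is the same identity.

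The engine for (iii) is a refinement lemma: modulo $\d_{\alpha}$, each form in $F_k(X;n)$ is a disjoint sum of forms in $F_{k+1}(X;n)$, and by iteration a sum of forms in $F_{k'}(X;n)$ for every $k'\geq k$. I would prove this by induction on $k$ by showing that each degree-$(k+1)$ form $\psi$ \emph{decides} each degree-$k$ form $\phi$, i.e.\ either $\psi\leq\phi$ or $\psi\cdot\phi=0$; granting this, multiplying $\phi$ by $1=\sum F_{k+1}(X;n)$ (part (i)) and using pairwise disjointness (part (ii)) yields $\phi=\sum\{\psi\in F_{k+1}(X;n):\psi\leq\phi\}$. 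To see that $\psi$ decides $\phi$: $\psi=X^{\beta}\cdot(C_n(F_k(X;n)))^{\gamma}$ fixes the sign of every variable and of every $c_i\rho$ with $\rho\in F_k(X;n)$, $i\in n$. The variable part of $\phi$ is decided at once. Its cylindric part involves terms $c_i\rho'$ with $\rho'\in F_{k-1}(X;n)$; here I invoke the inductive refinement $\rho'=\sum\{\rho\in F_k(X;n):\rho\leq\rho'\}$ together with additivity of $c_i$ (secured by axioms (Ax 1a)--(Ax 1c), which by design make each $c_i$ an additive closure operator) to rewrite $c_i\rho'=\sum\{c_i\rho:\rho\in F_k(X;n),\ \rho\leq\rho'\}$. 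Since $\psi$ already fixes the sign of each $c_i\rho$, it fixes that of $c_i\rho'$, so $\psi$ decides $\phi$.

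For (iii) I would prove the slightly stronger statement that every $\tau\in T_n(X)$ equals, modulo $\d_{\alpha}$, a sum $\sum S$ of normal forms of a single degree; the hypothesis $\d_{\alpha}\not\models\tau=0$ then forces $S\neq\emptyset$, because by (i) and (ii) one has $\sum S=0$ exactly when $S=\emptyset$. The induction is on the structure of $\tau$. For a variable, $x=\sum\{X^{\beta}\in F_0(X;n):\beta(x)=1\}$. For the Boolean connectives, given subterms already written over $F_{q_1}(X;n)$ and $F_{q_2}(X;n)$, the refinement lemma raises both to the common degree $q=\max(q_1,q_2)$, after which $+$, $\cdot$ and $-$ become union, intersection and $F_q(X;n)$-complement of the index sets (again using (i) and (ii)). For a cylindrification $c_i$ with $i\in n$ applied to $\sigma=\sum S$, $S\subseteq F_q(X;n)$, additivity gives $c_i\sigma=\sum_{\phi\in S}c_i\phi$; each $c_i\phi$ lies in $C_n(F_q(X;n))$ and is therefore one of the declared generators of the degree-$(q+1)$ forms, so $c_i\phi=\sum\{\psi\in F_{q+1}(X;n):\psi\leq c_i\phi\}$, and $c_i\sigma$ becomes a sum of forms of degree $q+1$.

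The main obstacle is the cylindrification step and, beneath it, the refinement lemma; everything else is Boolean bookkeeping. Two points are essential. First, one must genuinely use that $c_i$ is additive throughout $\d_{\alpha}$, not merely in the representable algebras $\ds_{\alpha}$ where it is evident from the set semantics; this is precisely what axioms (Ax 1a)--(Ax 1c) provide, and it is what licenses both pushing $c_i$ through finite sums and the inductive computation of $c_i\rho'$. Second, the restriction $\tau\in T_n(X)$, allowing cylindrifications only in directions $i\in n$, is indispensable: it is exactly what guarantees $c_i\phi\in C_n(F_q(X;n))$, so that a cylindrification never leaves the normal-form system. I expect the delicate point to be the claim that a degree-$(k+1)$ form decides a degree-$k$ form, since this is where the two nested inductions meet: the induction on $k$ for refinement feeds, and is consumed by, the induction on term complexity for (iii).
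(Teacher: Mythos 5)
Your proposal is correct and takes essentially the same route as the paper's source for this result: the paper does not prove Theorem~\ref{andreka} itself but cites \cite[Lemma 4.9 and Theorem 4.10]{mysh}, where the argument is precisely your scheme --- (i) and (ii) as Boolean facts about complete conjunctions over the finite generator sets, a refinement lemma showing each degree-$k$ form is the disjoint sum of the degree-$(k{+}1)$ forms below it (via the ``decision'' property and the additivity of $c_i$ guaranteed by (Ax~1a)--(Ax~1c)), and then structural induction on $\tau\in T_n(X)$, with the restriction to cylindrifications $c_i$, $i\in n$, keeping $c_i\phi$ inside $C_n(F_q(X;n))$. In effect you have reconstructed the omitted proof, including the two correct key observations: that additivity must be derived axiomatically in $\d_{\alpha}$ rather than read off from set semantics, and that nonemptiness of $S$ follows immediately from $\d_{\alpha}\not\models\tau=0$ since an empty sum is $0$.
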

Note that (i) and (ii) of the above theorem state that $F_k(X;n)$ forms a partition of the unit. The following definition introduces some notations that will be used in the proceeding sections.
\begin{defn}
Let $k\in\omega$ and $n\in\alpha+1$ be finite ordinals such that $n\geq 2$. Let $X$ be a finite set of variables. Let $\beta\in{^{X}\{-1,1\}}$ and $\alpha\in{^{C_n(F_k(X;n))}\{-1,1\}}$. For each $i\in n$, define
\begin{center}
$\sub_i(X^{\beta}\cdot(C_n(F_k(X;n)))^{\alpha})\myeq\{\sigma\in F_k(X;n):\alpha(c_i\sigma)=1\}$, and
\end{center}
\begin{center}$\color(X^{\beta})\myeq \color(X^{\beta}\cdot(C_n(F_k(X;n)))^{\alpha})\myeq\{\sigma\in X:\beta(\sigma)=1\}$.\end{center}
\end{defn}
\section{Finite schema axiomatizability, decidability, etc}
In this section, we fix finite ordinals $k\in\omega$ and $n\in\alpha+1$ such that $n\geq 2$, and we fix finite set $X$. Consider any normal form $\tau\in F_k(X;n)$. We will construct a unit $V$, on a finite base, such that $$\d_{\alpha}\not\models\tau=0\iff\a{P}(V)\not\models\tau=0.$$ 
We do this inductively by constructing a finite sequence $V_0\subseteq V_1\subseteq \cdots \subseteq V_k$ (of length $k+1$), and then we let $V\myeq V_k$ be the desired unit. Throughout the construction, whenever we add an element $e\in V$, we label it by some normal form $\tg(e)\in F_0(X;n)\cup\cdots\cup F_k(X;n)$. While constructing $V$, our target is to guarantee that each element satisfies its label. 

To start, let $U'$ be an infinite set and let $t\not\in U'$ be any entity. For any elements $u_0,\ldots,u_{n-1}\in U'$, by writing $\bar{u}=(u_0,\ldots,u_{n-1},\bar{t})$ we mean the sequence, of length $\alpha$, defined as follows: For each $i\in n$, $\bar{u}(i)=u_i$. For each $i\in\alpha\setminus n$, $\bar{u}(i)=t$. The sequence $\bar{t}$ is called the tail of the desired unit $V$.

{\bf{\underline{Constructing $V_0$}:}}

Let $U_0\myeq\{v_0,\ldots,v_{n-1}\}\subseteq U'$ be such that $\lvert U_0\rvert=n$. Let $V_0=V_0^0=\cdots=V_0^{n-1}\myeq\{(v_0,\ldots,v_{n-1},\bar{t})\}$. Define the label of the unique element in $V_0$ as follows: $\tg(v_0,\ldots,v_{n-1},\bar{t})=\tau$. 

Suppose that, for some $l\in k$, we are given the finite sets $U_l\subseteq U'$, $V_l,V_l^0,\cdots,V_l^{n-1}\subseteq {^n{U_l}}\times\{\bar{t}\}$. Also, assume that we are given the labels of the elements in $V_l$.

{\bf{\underline{Constructing $V_{l+1}$}:}}

For every $i\in n$, every $j\in n\setminus\{i\}$ and every $\bar{v}\in V_l^{j}$, create an injective function $$\psi_{\bar{v}}^i:\sub_i(\tg(\bar{v}))\rightarrow U'\setminus U_l,$$ such that the ranges $(\psi^i_{\bar{v}})^*$ of all of those functions are pairwise disjoint and $U'\setminus U_{l+1}$ is still infinite, where $U_{l+1}\myeq\bigcup\{(\psi^i_{\bar{v}})^*:i\in n, \bar{v}\in V_l^j\text{ for some }j\in n,j\not=i\}\cup U_l$. Now, for every $i\in n$, let
$$V_{l+1}^i\myeq\{\bar{v}_{i}^{u}:\bar{v}\in V_l^j\text{ for some }j\in n, j\not=i, \text{ and } u\in (\psi^i_{\bar{v}})^*\}.$$
Let $V_{l+1}\myeq V_l\cup V_{l+1}^0\cup\cdots\cup V_{l+1}^{n-1}$. We extend the labels as follows: Let $i\in n$, $j\in n\setminus\{i\}$, $\bar{v}\in V^j_l$ and $\sigma\in \sub_i(\tg(\bar{v}))$. Suppose that $u=\psi^i_{\bar{v}}(\sigma)$, define $\tg(\bar{v}_i^u)\myeq \sigma$.

\begin{figure}[!ht]
\centering
\begin{minipage}{\textwidth}
\centering
\begin{tikzpicture}
\draw (0,3.4) node {};
\draw (0,-1.4) node {};
\draw (0, 0) ellipse (0.4 and 0.2);
\draw (0,-0.4) node {$V_0$};
\draw (-2.5, 1) ellipse (0.8 and 0.3);
\draw (-3.6,1) node {$V_1^0$};
\draw (2.5, 1) ellipse (0.8 and 0.3);
\draw (3.6,1) node {$V_1^2$};
\draw (0,0) node {$\bullet$};
\draw (0.5,1) node {$\bullet$};
\draw (-0.5,1) node {$\bullet$};
\draw (0,0) -- (-0.5,1);
\draw (0,0) -- (0.5,1);
\draw (-0.5,1) -- (0.5,1);
\draw (0, 1) ellipse (0.8 and 0.3);
\draw (0,1.5) node {$V_1^1$};
\draw (-2,1) node {$\bullet$};
\draw (2,1) node {$\bullet$};
\draw (0,0) -- (-2,1);
\draw (0,0) -- (2,1);
\draw (-3,1) node {$\bullet$};
\draw (3,1) node {$\bullet$};
\draw (0,0) -- (-3,1);
\draw (0,0) -- (3,1);
\draw (-3,1) -- (-2,1);
\draw (3,1) -- (2,1);
\draw (4,2.5) node {$\bullet$};
\draw (5,2.5) node {$\bullet$};
\draw (2,2.5) node {$\bullet$};
\draw (1,2.5) node {$\bullet$};
\draw (3,1) -- (4,2.5);
\draw (3,1) -- (5,2.5);
\draw (3,1) -- (2,2.5);
\draw (3,1) -- (1,2.5);
\draw (5,2.5) -- (4,2.5);
\draw (2,2.5) -- (1,2.5);
\draw (4.5, 2.5) ellipse (0.8 and 0.3);
\draw (5.8,2.5) node {$\subseteq V_2^1$};
\draw (0.2,2.5) node {$V_2^0\supseteq$};
\draw (1.5, 2.5) ellipse (0.8 and 0.3);
\end{tikzpicture}
\end{minipage}
\end{figure}

{\bf{\underline{The desired algebra:}}}

Finally, let $V\myeq V_k$ and $U\myeq U_k$. Remember $V\subseteq {^n U}\times\{\bar{t}\}$. We call $U$ the actual base of $V$.

Note that, for each $\bar{v}\in V$ and each $1\leq l\leq k$, we have 
\begin{equation}\label{deg}
\bar{v}\in V_0\iff \tg(\bar{v})=\tau\in F_k(X;n) \ \ \text{ and } \ \ 
\bar{v}\in V_l\setminus V_{l-1}\iff \tg(\bar{v})\in F_{k-l}(X;n).
\end{equation}
Define the evaluation, $\ev:X\rightarrow \mathcal{P}(V)$, of free variables into $\a{P}(V)$ as follows: For each $x\in X$, let $\ev(x)\myeq\{\bar{v}\in V:x\in \color(\tg(\bar{v}))\}$. For every $\bar{v}\in V$ and every term $\sigma\in T_{\alpha}(X)$, we write $(V,\ev,\bar{v})\models\sigma$ if and only if $\bar{v}$ is in the interpretation of the term $\sigma$ in the full algebra $\a{P}(V)$, under the evaluation $\ev$. Now, we prove that $\a{P}(V)$ is as desired.
\begin{lem}\label{7abibi}
$\d_{\alpha}\not\models\tau=0$ if and only if $\a{P}(V)\not\models\tau=0$.
\end{lem}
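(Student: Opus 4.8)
The implication from right to left is immediate: since $\a{P}(V)\in\ds_{\alpha}\subseteq\d_{\alpha}$, if $\d_{\alpha}\models\tau=0$ then in particular $\a{P}(V)\models\tau=0$. For the converse I would keep the evaluation $\ev$ fixed as above and show that the unique point of $V_0$ lies in the interpretation of $\tau$; this makes the interpretation of $\tau$ in $\a{P}(V)$ under $\ev$ nonempty, hence $\a{P}(V)\not\models\tau=0$. Everything therefore reduces to a ``truth lemma'' asserting that each point of $V$ satisfies its own label.

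The plan is to prove the following stronger statement by induction on $d\in\omega$: \emph{for every $\bar v\in V$ with $\deg(\tg(\bar v))\geq d$ one has $(V,\ev,\bar v)\models r_d(\tg(\bar v))$}, where $r_d(\sigma)$ denotes the unique degree-$d$ normal form above $\sigma$ (well defined because the partitions $F_m(X;n)$ refine one another, a routine consequence of Theorem~\ref{andreka}(\ref{andc1}),(\ref{andc2}) together with additivity of the $c_i$). Taking $d=\deg(\tg(\bar v))$ then yields $(V,\ev,\bar v)\models\tg(\bar v)$ for all $\bar v$, and in particular the unique point of $V_0$ satisfies its label $\tau$ by \eqref{deg}. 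The base case $d=0$ is immediate: here $r_0(\tg(\bar v))=X^{\beta}$ with $\{x:\beta(x)=1\}=\color(\tg(\bar v))$, and by the very definition of $\ev$ one has $\bar v\in\ev(x)\iff x\in\color(\tg(\bar v))$. In the successor step the Boolean conjunct of $r_{d+1}(\tg(\bar v))$ is handled exactly as in the base case, so the whole step comes down to the cylindric conjuncts, i.e.\ to checking, for each $i\in n$ and each $\sigma\in F_d(X;n)$, that $(V,\ev,\bar v)\models c_i\sigma$ precisely when $\sigma\in\sub_i(r_{d+1}(\tg(\bar v)))$. By the induction hypothesis at level $d$ (and Theorem~\ref{andreka}) a point $\bar w$ satisfies $\sigma$ iff $r_d(\tg(\bar w))=\sigma$; hence, writing $[\bar v]_i$ for the $\equiv_i$-class of $\bar v$ inside $V$, the step reduces to the set identity
\begin{equation}\label{starstar}
\{\,r_d(\tg(\bar w)):\bar w\in[\bar v]_i\,\}=\sub_i\big(r_{d+1}(\tg(\bar v))\big).
\end{equation}

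To establish \eqref{starstar} I need two ingredients. The first is a combinatorial description of the classes $[\bar v]_i$: thanks to the freshness and pairwise disjointness of the ranges $(\psi^i_{\bar v})^{*}$, two distinct points of $V$ can agree off the coordinate $i$ only inside a single ``$i$-bundle'', so $[\bar v]_i$ consists of one root together with the $i$-children attached to it. Concretely, if $\bar v$ is eligible in direction $i$ (equivalently, was not created as an $i$-child) then it is itself the root and $[\bar v]_i=\{\bar v\}\cup\{\bar v_i^{u}:u\in(\psi^i_{\bar v})^{*}\}$, whose labels form the set $\{\tg(\bar v)\}\cup\sub_i(\tg(\bar v))$; if instead $\bar v$ was created as an $i$-child of some $\bar p$, then $[\bar v]_i$ is the whole $i$-bundle of $\bar p$, with labels $\{\tg(\bar p)\}\cup\sub_i(\tg(\bar p))$. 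The second ingredient is a coherence property of the normal forms, and this is where the hypothesis $\d_{\alpha}\not\models\tau=0$ enters: since $c_i0=0$, every $\sigma\in\sub_i(\mu)$ with $\mu$ satisfiable is again satisfiable, so by induction all labels occurring in $V$ are satisfiable; and since $x\leq c_ix$, satisfiability of $\mu$ forces $r_{\deg(\mu)-1}(\mu)\in\sub_i(\mu)$, which lets the root label be absorbed into the children's labels upon passing to degree-$d$ reducts. Combining this absorption with the behaviour of $\sub_i$ under reduction, and with the fact that the $i$-type of an $i$-successor is determined by that of its predecessor (both being consequences of additivity and idempotency of the $c_i$), yields \eqref{starstar} in both cases.

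The main obstacle is the interaction of these two ingredients rather than either one in isolation. Proving that the $\equiv_i$-classes are exactly the $i$-bundles is the delicate combinatorial point, since it requires turning the informal ``fresh values never collide'' into a rigorous statement about the coordinatewise histories of the constructed sequences; and matching the resulting collection of labels with $\sub_i(r_{d+1}(\tg(\bar v)))$ forces one to track how $\sub_i$ commutes with reduction, which is precisely where satisfiability, via $c_i0=0$ and $x\leq c_ix$, becomes indispensable. Once \eqref{starstar} is in hand the induction closes and the lemma follows.
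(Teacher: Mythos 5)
Your plan is correct and follows essentially the same route as the paper: the paper's proof introduces exactly your reducts $r_d$ (there written $\tg_h(\bar v)$), runs the same induction on the degree, and closes the inductive step by establishing precisely your set identity matching $\{r_d(\tg(\bar w)):\bar w\equiv_i\bar v\}$ with $\sub_i(r_{d+1}(\tg(\bar v)))$, split there into the root case and the non-root case with the direction back to the parent handled by transferring $\sub_j$ along the bundle. One small correction: the transfer of $i$-types from parent to $i$-child rests on (Ax 1c), $c_i(x\cdot c_iy)=c_ix\cdot c_iy$ (together with (Ax 1a)), not on additivity and idempotency alone.
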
 
\begin{proof}
Note that $\ds_{\alpha}\subseteq \d_{\alpha}$, so the direction ($\Leftarrow$) is trivial. Now, we prove the non-trivial direction ($\Rightarrow$). Suppose that $\d_{\alpha}\not\models\tau=0$. Then by (Ax 1a), it follows that $\d_{\alpha}\not\models \tg(\bar{v})=0$ for each element  $\bar{v}\in V$. For each $\bar{v}\in V$ and each $h\leq k$, we define $\tg_h(\bar{v})$ as follows:

Suppose that $l\leq k$ is the smallest number for which $\bar{v}\in V_l$. Remember the facts $\d_{\alpha}\not\models \tg(\bar{v})=0$ and $\tg(\bar{v})\in F_{k-l}(X;n)$ (by \eqref{deg}). If $h\geq k-l$ then we define $\tg_h(\bar{v})\myeq \tg(\bar{v})$. Suppose $h< k-l$, then by Theorem~\ref{andreka} there is a unique normal form $\sigma\in F_h(X;n)$ such that $\d_{\alpha}\models \tg(\bar{v})\leq \sigma$, so in this case we define $\tg_h(\bar{v})\myeq\sigma$.

To finish, it is enough to prove the following. For each $\bar{v}\in V$ and each $h\leq k$,
\begin{equation}\label{eq}
(V,\ev,\bar{v})\models \tg_h(\bar{v}).
\end{equation}
We use induction on $h$. The choice of the evaluation guarantees that \eqref{eq} is true for every $\bar{v}\in V$ when $h=0$. Suppose that \eqref{eq} holds for every $\bar{v}\in V$, for some $h\in k$. Let $\bar{v}\in V$, we need to show that $(V,\ev,\bar{v})\models \tg_{h+1}(\bar{v})$. Let $l\leq k$ be the smallest number for which $\bar{v}\in V_l$. Thus, by \eqref{deg}, we have $\tg(\bar{v})\in F_{k-l}(X;n)$. If $h\geq k-l$, then $\tg_{h+1}(\bar{v})=\tg_h(\bar{v})$ and, by the induction hypothesis, we are done. So, suppose that $h<k-l$. We consider two cases.
\begin{enumerate}[(I)]
\item The first case is when $l=0$. In this case, $\tg(\bar{v})=\tau\in F_k(X;n)$. By the choice of the evaluation $\ev$, it is easy to check that
\begin{equation}\label{c1x}
(\forall x\in X) \ \ \ [(V,\ev,\bar{v})\models x \iff x\in \color(\tg_{h+1}(\bar{v}))].
\end{equation}
Let $i\in n$ and let $\sigma\in F_h(X;n)$. We need to prove the following.
\begin{equation}\label{c1s}
\sigma\in \sub_i(\tg_{h+1}(\bar{v}))\iff (V,\ev,\bar{v})\models c_i\sigma.
\end{equation}
Suppose that $\sigma\in \sub_i(\tg_{h+1}(\bar{v}))$. By Theorem~\ref{andreka} (iii), there is a finite set $S\subseteq F_{k-1}(X;n)$ such that $\d_{\alpha}\models\sum S=\sigma$. We claim that there is $\sigma'\in S$ such that $\sigma'\in \sub_i(\tg(\bar{v}))$. Suppose towards a contradiction that $\d_{\alpha}\models \tg(\bar{v})\leq -c_i\sigma'$ for every $\sigma'\in S$. Then, by the additivity of the operator $c_i$, it follows that $\tg(\bar{v})\leq -c_i \sum S=-c_i\sigma$. This contradicts the facts that $\sigma\in \sub_i(\tg_{h+1}(\bar{v}))$ and $\d_{\alpha}\models\tau\not=0$. Thus, there exists $\sigma'\in \sub_i(\tg(\bar{v}))$ such that $\d_{\alpha}\models\sigma'\leq \sigma$. By the construction of $V$, there exists $\bar{u}\in V_1^i$ such that $\tg(\bar{u})=\sigma'$ and $\bar{v}\equiv_i\bar{u}$. Hence, by induction hypothesis, $(V,\ev,\bar{u})\models \tg_h(\bar{u})=\sigma$. Therefore, $(V,\ev,\bar{v})\models c_i\sigma$. 

Conversely, let $\bar{u}\in V$ be such that $\bar{v}\equiv_i\bar{u}$ and $(V,\ev,\bar{u})\models\sigma$. Suppose that $\bar{u}=\bar{v}$, then by induction hypothesis and Theorem~\ref{andreka} (ii) we have $\tg_h(\bar{v})=\sigma$. Again, by Theorem~\ref{andreka}, $\d_{\alpha}\models\tau\leq\sigma$. Thus, by axiom (Ax 1b), $\sigma\in \sub_j(\tg_{h+1}(\bar{v}))$ as desired. Suppose that $\bar{u}$ and $\bar{v}$ are different. By the construction, there exists $\sigma'\in \sub_i(\tg(\bar{v}))$ such that $\tg(\bar{u})=\sigma'$. Then, by the induction hypothesis, we must have $\sigma=\tg_h(\bar{u})$, i.e., $\d_{\alpha}\models \sigma'\leq\sigma$. Thus, $\d_{\alpha}\not\models \tg(\bar{v})\cdot c_i\sigma=0$. But $\d_{\alpha}\models \tg(\bar{v})\leq \tg_{h+1}(\bar{v})$, hence $\d_{\alpha}\not\models \tg_{h+1}(\bar{v})\cdot c_i\sigma=0$. This can happen only if $\sigma\in \sub_i(\tg_{h+1}(\bar{v}))$.

Thus, we have shown that \eqref{c1s} is true for every $i\in n$ and every $\sigma\in F_h(X;n)$. Therefore, by \eqref{c1x} and \eqref{c1s}, we have $(V,\ev,\bar{v})\models \tg_{h+1}(\bar{v})$, as desired.
\item Now, suppose that $l\not=0$. Again the choice of the evaluation guarantees the following. 
\begin{equation}\label{c2x}
(\forall x\in X) \ \ \ [(V,\ev,\bar{v})\models x \iff x\in \color(\tg_{h+1}(\bar{v}))].
\end{equation}
Suppose that $\bar{v}\in V_l^j$ for some $j\in n$. Let $i\in n$ be such that $i\not=j$. Then, by a similar argument to the one used in the above item, one can see that 
\begin{equation}\label{c2si}
(\forall \sigma\in F_h(X;n)) \ \ \ [\sigma\in \sub_i(\tg_{h+1}(\bar{v}))\iff (V,\ev,\bar{v})\models c_i\sigma].
\end{equation}
By the construction of $V$ and since $l\not=0$, there exists an element $\bar{w}\in V_{l-1}$ such that $(l-1)$ is the smallest number for which $\bar{w}\in V_{l-1}$, $\bar{w}\equiv_j\bar{v}$ and $\tg(\bar{v})\in \sub_j(\tg(\bar{w}))$. Thus, by axioms (Ax 1a) and (Ax 1c), and Theorem~\ref{andreka}, it follows that $\sub_j(\tg_{h+1}(\bar{v}))=\sub_j(\tg_{h+1}(\bar{w}))$. Since $\bar{v}\in V_l^j$, then $\bar{w}\not\in V^j_{l-1}$. Now, by \eqref{c2si} it follows that
\begin{equation}\label{c2spj}
(\forall \sigma\in F_h(X;n)) \ \ \ [(V,\ev,\bar{w}) \models c_j\sigma \iff \sigma\in \sub_j(\tg_{h+1}(\bar{w}))]
\end{equation}
Hence, for every $\sigma\in F_h(X;n)$, we have
\begin{equation}\label{c2sj}
 \sigma\in \sub_j(\tg_{h+1}(\bar{v}))=\sub_j(\tg_{h+1}(\bar{w}))\iff (V,\ev,\bar{w}) \models c_j\sigma\iff (V,\ev,\bar{v}) \models c_j\sigma.
\end{equation}
Finally, \eqref{c2x}, \eqref{c2si} and \eqref{c2sj} imply that $(V,\ev,\bar{v})\models \tg_{h+1}(\bar{v})$, as desired.
\end{enumerate}
Thus, by the principle of mathematical induction, we have shown that \eqref{eq} holds for each $h\leq k$ and each $\bar{v}\in V$. Now, let $\bar{v}$ be the unique node in $V_0$. Note that $\tg_{k}(\bar{v})=\tau$. Hence, $(V,\ev,\bar{v})\models\tau$. Therefore, $\a{P}(V)\not\models\tau=0$ and we are done.
\end{proof}
Now, the first three items of Theorem~\ref{main1} are direct consequences of Lemma~\ref{7abibi}. The super amalgamation property follows from Lemma~\ref{7abibi} together with \cite[Theorem 5.3.5]{marx}.
\begin{proof}[Proof of Theorem~\ref{main1}]\
\begin{enumerate}
\item[(1)] To prove the finite schema axiomatizability, it is enough to prove that $\ds_{\alpha}=\d_{\alpha}$. Let $Y$ be any set of variables (finite or infinite) and let $\tau\in T_{\alpha}(Y)$. Then, 
$$\d_{\alpha}\models\tau=0\iff\ds_{\alpha}\models\tau=0.$$ 
The implication $\Longrightarrow$ follows from the fact that $\ds_{\alpha}\subseteq\d_{\alpha}$. For the other direction, suppose that $\d_{\alpha}\not\models\tau=0$. Let $m\in\omega$ and let $Z\subseteq Y$ be a finite set such that $\tau\in T_m(Z)$. We can assume that $m\geq 2$. By Theorem~\ref{andreka} \eqref{andc4}, there are $k\in\omega$ and $\sigma\in F_k(Z;m)$ such that $\d_{\alpha}\models \sigma\leq\tau$ and $\d_{\alpha}\not\models\sigma=0$. The fact that $\ds_{\alpha}\subseteq \d_{\alpha}$ implies that $\ds_{\alpha}\models\sigma\leq\tau$. Moreover, by Lemma~\ref{7abibi} we can say that $\ds_{\alpha}\not\models\sigma=0$. Thus, we have $\ds_{\alpha}\not\models\tau=0$ and the implication $\Longleftarrow$ is established. Therefore, we can also deduce that $\a{Fr}_Y\d_{\alpha}\cong \f{Y}{\alpha}$.

Now, we need to show that $\d_{\alpha}\subseteq\ds_{\alpha}$. Let $\a{A}\in\d_{\alpha}$. By the universal mapping property, there is an onto homomorphism $f:\a{Fr}_A\d_{\alpha}\rightarrow\a{A}$. In other words, $\a{A}$ is a homomorphic image of $\a{Fr}_A\d_{\alpha}$. Thus, $\a{A}$ is a homomorphic image of $\mathfrak{Fr}_A\ds_{\alpha}$ too. But $\ds_{\alpha}$ is a variety, so it contains all its free algebras and it is closed under $\mathbf{H}$. Therefore, $\a{A}\in\ds_{\alpha}$ as desired. 
\item[(2)] The finite base property follows immediately from Lemma~\ref{7abibi}, Theorem\ref{andreka} and (1).
\item[(3)] It is known that the finite schema axiomatizability and the finite base property imply the decidability of the equational theory, see, e.g., \cite{fmpd}.
\item[(4)] All the algebras constructed in this section are locally finite dimensional. Thus $\ds_{\alpha}$ is generated by its locally finite dimensional algebras.
\item[(5)] We showed that $\ds_{\alpha}$ is characterized by positive equations, so it is canonical variety. Thus, super amalgamation property follows from \cite[Theorem 5.3.5]{marx}.
\end{enumerate}
\end{proof}
\section{Free algebras: atoms and zero-dimensional elements}
In this section we give the proof of Theorem~\ref{main2}. We start with the following.
\begin{thm}
The free algebra $\f{\emptyset}{\alpha}$ is a two elements algebra, hence it is atomic.
\end{thm}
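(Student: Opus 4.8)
The plan is to identify $\mathfrak{Fr}_{\emptyset}\ds_{\alpha}$ with the algebra of closed terms modulo the equational theory of $\ds_{\alpha}$. Since $\ds_{\alpha}$ is a variety (Proposition~\ref{variety}), its free algebra on the empty set of generators exists and is carried by the $\ds_{\alpha}$-provable equivalence classes of the closed terms in $T_{\alpha}(\emptyset)$, that is, the terms built from the constants $0,1$ using only $\cdot,+,-$ and the cylindrifications $c_i$. So the whole task reduces to counting these equivalence classes.

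First I would show, by induction on the structure of terms, that every $\tau\in T_{\alpha}(\emptyset)$ satisfies $\ds_{\alpha}\models\tau=0$ or $\ds_{\alpha}\models\tau=1$. The base case is immediate because the only constants available are $0$ and $1$. For the Boolean connectives the claim follows from the fact that $\{0,1\}$ is closed under $\cdot,+,-$ in every Boolean algebra (Ax 0). For the cylindrifications I use that $c_i0=0$ by (Ax 1a), while $c_i1=1$ is a consequence of (Ax 1b): instantiating $x+c_ix=c_ix$ at $x=1$ gives $1\leq c_i1$, and since $1$ is the top element we get $c_i1=1$. Hence the closed terms collapse onto at most the two classes $[0]$ and $[1]$, so $\mathfrak{Fr}_{\emptyset}\ds_{\alpha}$ has at most two elements.

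It remains to show these two classes are distinct, i.e.\ $\ds_{\alpha}\not\models 0=1$, equivalently $[0]\neq[1]$ in the free algebra. This is witnessed by any nontrivial member of $\ds_{\alpha}$: for a nonempty $V\subseteq{}^{\alpha}U$, the full algebra $\a{P}(V)$ has $\emptyset\neq V$, so $0\neq 1$ there. Therefore $\mathfrak{Fr}_{\emptyset}\ds_{\alpha}$ has exactly the two elements $0$ and $1$, its Boolean reduct is the two-element Boolean algebra, and both cylindrifications act trivially. Atomicity is then immediate, since the only nonzero element is the top $1$, which is itself an atom, so below every nonzero element there is an atom.

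I would note that there is no genuine obstacle here; the only steps needing a line of verification are the derivation $c_i1=1$ from the additivity/closure axioms and the remark that the collapse of the closed terms does not force $0=1$ (guaranteed by the existence of a nontrivial set algebra). In particular this theorem is elementary and, unlike Theorems~\ref{main2} and~\ref{main3}, requires none of the normal-form machinery of Theorem~\ref{andreka} or the base construction of Lemma~\ref{7abibi}.
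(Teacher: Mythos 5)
Your proof is correct and is simply a fully worked-out version of what the paper dismisses as ``Straightforward since any finite Boolean algebra is atomic'': you verify that the closed terms collapse to $[0]$ and $[1]$ (via (Ax 1a) and the derived identity $c_i1=1$) and that these classes are distinct because $\ds_{\alpha}$ contains a nontrivial set algebra. No gap, and no genuinely different route from the paper's intended argument.
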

\begin{proof}
Straightforward since any finite Boolean algebra is atomic.
\end{proof}
\begin{thm}
Let $X$ be an infinite set, the free algebra $\f{X}{\alpha}$ is atomless.
\end{thm}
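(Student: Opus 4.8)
The plan is to exploit the infinitude of $X$ through a \emph{fresh variable} argument: I will show that any nonzero element of $\f{X}{\alpha}$ splits into two disjoint nonzero pieces by intersecting it with a variable that does not occur in it, and with the complement of that variable. Hence no nonzero element can be an atom.

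First I would take an arbitrary nonzero element of the free algebra. Since $\f{X}{\alpha}\cong\mathfrak{Fr}_X\d_{\alpha}$ (established in the proof of Theorem~\ref{main1}), such an element is the class $[\tau]$ of some term $\tau\in T_{\alpha}(X)$ with $\d_{\alpha}\not\models\tau=0$. Only finitely many variables and finitely many cylindrifications occur in $\tau$, so there are a finite $Z\subseteq X$ and a finite $n\in\alpha+1$ with $n\geq 2$ such that $\tau\in T_n(Z)$. By Theorem~\ref{andreka}\eqref{andc4} there are $q\in\omega$ and a nonempty finite $S\subseteq F_q(Z;n)$ with $\d_{\alpha}\models\tau=\sum S$. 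Discarding from $S$ the normal forms that equal $0$ and using $\d_{\alpha}\not\models\tau=0$, I can fix a single normal form $\sigma\in S$ with $\d_{\alpha}\not\models\sigma=0$ and $\d_{\alpha}\models\sigma\leq\tau$.

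Next, since $X$ is infinite and $Z$ is finite, I would choose a variable $x\in X\setminus Z$, and claim that both $\sigma\cdot x$ and $\sigma\cdot(-x)$ are nonzero in $\d_{\alpha}$. To verify this I would invoke the concrete model built in Lemma~\ref{7abibi}: applied to the nonzero normal form $\sigma$, it yields a finite-base unit $V$, an evaluation $\ev\colon Z\to\mathcal{P}(V)$, and a root node $\bar{v}_0\in V_0$ with $(V,\ev,\bar{v}_0)\models\sigma$. Because $x\notin Z$, the variable $x$ plays no role in this construction, so I am free to extend $\ev$ to $Z\cup\{x\}$ in any way without changing the interpretation of $\sigma$ anywhere, in particular without disturbing $(V,\ev,\bar{v}_0)\models\sigma$. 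Setting $\ev(x)\myeq\{\bar{v}_0\}$ then gives $(V,\ev,\bar{v}_0)\models\sigma\cdot x$, while setting $\ev(x)\myeq\emptyset$ gives $(V,\ev,\bar{v}_0)\models\sigma\cdot(-x)$. Since $\a{P}(V)\in\ds_{\alpha}\subseteq\d_{\alpha}$, these two witnesses establish $\d_{\alpha}\not\models\sigma\cdot x=0$ and $\d_{\alpha}\not\models\sigma\cdot(-x)=0$.

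Finally I would assemble the conclusion. From $\d_{\alpha}\models\sigma\leq\tau$ we get $\sigma\cdot x\leq\tau\cdot x$ and $\sigma\cdot(-x)\leq\tau\cdot(-x)$, so both $[\tau\cdot x]$ and $[\tau\cdot(-x)]$ are nonzero in $\f{X}{\alpha}$; moreover they are disjoint and their join is $[\tau]$. Hence $0<[\tau\cdot x]<[\tau]$, so $[\tau]$ is not an atom, and since $[\tau]$ was an arbitrary nonzero element, $\f{X}{\alpha}$ is atomless. I expect the only delicate point to be the middle step, namely the rigorous claim that extending the evaluation to the fresh variable $x$ leaves the satisfaction of $\sigma$ at $\bar{v}_0$ intact while simultaneously allowing either truth value of $x$ at $\bar{v}_0$; everything else is routine Boolean and free-algebra bookkeeping. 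It is precisely the unavailability of such a fresh variable that would make this argument fail for finite $X$, which is why the infinitude of $X$ is used here.
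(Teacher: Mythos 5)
Your proof is correct, and its central idea---split a nonzero $[\tau]$ as $[\tau\cdot x]+[\tau\cdot(-x)]$ for a variable $x$ not occurring in $\tau$, which exists because $X$ is infinite---is exactly the Pigozzi argument the paper uses. Where you differ is in how the two pieces are shown to be nonzero. The paper does this abstractly: it picks any $\a{B}\in\ds_{\alpha}$ with $\tau^{\a{B}}\neq 0$ and uses the universal mapping property to produce two homomorphisms $f,g$ from the free algebra that agree on the variables of $\tau$ but send the fresh variable to $1$ and to $0$ respectively, giving $f(\tau\cdot x)=g(\tau\cdot(-x))=\tau^{\a{B}}\neq 0$. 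You instead descend to a normal form $\sigma\leq\tau$ via Theorem~\ref{andreka}~\eqref{andc4}, run the explicit finite-base construction of Lemma~\ref{7abibi} for $\sigma$, and re-evaluate the fresh variable inside that concrete model. Both are valid, and your ``delicate point'' is indeed unproblematic: $\sigma$ is a term over $Z$ only, so its denotation in $\a{P}(V)$ is insensitive to $\ev(x)$. But note that the normal-form and model-construction machinery is doing no real work here---you could apply the same fresh-variable re-evaluation directly to any algebra witnessing $\tau\neq 0$, or, as the paper does, avoid semantics entirely via the two homomorphisms. The abstract version has the added benefit of working verbatim in any variety of Boolean algebras with operators, which is why the paper reserves the normal-form technology for the genuinely hard case of finite $X$, where no fresh variable is available.
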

\begin{proof}(Essentially due to D. Pigozzi \cite[2.5.13]{hmt1})
Let $\tau\in T_{\alpha}(X)$ be such that $\ds_{\alpha}\not\models\tau=0$. We show that $\tau$ is not an atom in $\f{X}{\alpha}$. Note that there is a finite $Y\subseteq X$ such that $\tau\in T_{\alpha}(Y)$. Let $y\in X\setminus Y$ and let $\a{B}\in\ds_{\alpha}$ be such that $\a{B}\not\models\tau=0$. By the universal mapping property, there are homomorphisms $f:\f{X}{\alpha}\rightarrow\a{B}$ and $g:\f{X}{\alpha}\rightarrow\a{B}$ such that $f(z)=g(z)$, for all $z\in Y$, while $f(y)=1$ and $g(y)=0$. Then, $f(\tau)=g(\tau)$. Hence, $f(\tau\cdot y)=g(\tau\cdot -y)=\tau^{\a{B}}\not=0$. So, $\ds_{\alpha}\not\models\tau\cdot y=0$ and $\ds_{\alpha}\not\models \tau\cdot-y=0$. Thus, $\tau$ can not be an atom in $\f{X}{\alpha}$. 
\end{proof}
To prove the remaining part of Theorem~\ref{main2}, we need to prove the following lemma. Let $k\in\omega$ and $n\in\alpha+1$ be finite ordinals such that $n\geq 2$, and let $X$ be a non-empty finite set. Let $\tau\in F_k(X;n)$. Recall the unit $V$ constructed in the previous section that witnesses the satisfiability of $\tau$.

\begin{figure}[!ht]
\centering
\begin{minipage}{\textwidth}
\centering
\begin{tikzpicture}
\draw (0,3.5) node {$V$};
\draw (0,0) ellipse (2 and 3);
\draw (0,-2.8) node {$\bullet$};
\draw (-0.3,-2.7) node {$\bar{v}_0$};
\draw (1,-2) node {$\bullet$};
\draw (1.3,-1.9) node {$\bar{v}_1$};
\draw (-1,-1) node {$\bullet$};
\draw (-1.3,-0.9) node {$\bar{v}_2$};
\draw (1,0) node {$\bullet$};
\draw (-1,1) node {$\bullet$};
\draw (1,2) node {$\bullet$};
\draw (0,2.8) node {$\bullet$};
\draw (-0.3,2.7) node {$\bar{v}_k$};
\draw (0,-2.8) -- (1,-2);
\draw (1,-2) -- (-1,-1);
\draw (-1,-1) -- (1,0);
\draw[dashed] (1,0) -- (-1,1);
\draw[dashed] (-1,1) -- (1,2);
\draw (1,2) -- (0,2.8);
\end{tikzpicture}
\end{minipage}
\end{figure}

\begin{lem}\label{seq}Suppose that $k$ is even. There is a sequence $\bar{v}_0,\ldots,\bar{v}_{k}\in V$ such that:
\begin{enumerate}[(1)]
\item For every $h\in k+1$, $\tg(\bar{v}_h)\in F_{h}(X;n)$. In particular, $\tg(\bar{v}_k)=\tau$.
\item For every $h\in k$: If $h$ is odd then $\bar{v}_h\equiv_0\bar{v}_{h+1}$. If $h$ is even then $\bar{v}_h\equiv_1\bar{v}_{h+1}$.
\end{enumerate}
\end{lem}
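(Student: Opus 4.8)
The plan is to build the sequence from the top of the tree downwards: start at the root $\bar{v}_k$ (the unique element of $V_0$, whose label is $\tau$) and descend one level at a time to a leaf $\bar{v}_0$. By the degree bookkeeping \eqref{deg}, an element of $V_{k-h}\setminus V_{k-h-1}$ carries a label in $F_h(X;n)$, so if I arrange that $\bar{v}_h$ lies at level $k-h$ of the tree then property (1) holds automatically. For property (2) I want $\bar{v}_h$ to be a child of $\bar{v}_{h+1}$ in the coordinate prescribed by the statement, i.e.\ in direction $1$ when $h$ is even and in direction $0$ when $h$ is odd; since an element of $V_{l+1}^i$ differs from its parent exactly in coordinate $i$, this forces $\bar{v}_h\equiv_i\bar{v}_{h+1}$ as required. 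The whole construction thus reduces to showing that at each step the prescribed child actually exists.

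The key fact making this possible is that a satisfiable normal form has a nonempty $\sub_i$ in every direction. Precisely, I will show: if $\sigma\in F_d(X;n)$ with $d\geq 1$ and $\d_{\alpha}\not\models\sigma=0$, then $\sub_i(\sigma)\neq\emptyset$ for every $i\in n$. Indeed, if $\sub_i(\sigma)=\emptyset$ then by definition $\sigma$ contains $-c_i\rho$ as a conjunct for every $\rho\in F_{d-1}(X;n)$, so $\d_{\alpha}\models\sigma\leq\prod_{\rho}-c_i\rho=-\sum_{\rho}c_i\rho$. But $F_{d-1}(X;n)$ partitions the unit (Theorem~\ref{andreka}) and $\rho\leq c_i\rho$ by (Ax 1b), whence $1=\sum_{\rho}\rho\leq\sum_{\rho}c_i\rho\leq 1$; thus $\sum_{\rho}c_i\rho=1$ and $\sigma\leq 0$, contradicting satisfiability. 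I also record that satisfiability is inherited downwards: if $\tg(\bar{v}_{h+1})$ is satisfiable and $\tg(\bar{v}_h)\in\sub_i(\tg(\bar{v}_{h+1}))$, then $\d_{\alpha}\models\tg(\bar{v}_{h+1})\leq c_i\,\tg(\bar{v}_h)$, so $c_i\,\tg(\bar{v}_h)\neq 0$ and hence $\tg(\bar{v}_h)\neq 0$ by (Ax 1a). Since $\tau$ is satisfiable, induction down the branch keeps every label satisfiable, so the nonemptiness statement applies at each node.

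With these observations the descent goes through. The root lies in $V_0^j$ for every $j$, so I may take $\bar{v}_{k-1}$ to be a child of $\bar{v}_k$ in direction $0$ (with $k$ even the top edge is in direction $0$); nonemptiness of $\sub_0(\tau)$ guarantees such a child exists and yields $\tg(\bar{v}_{k-1})\in F_{k-1}(X;n)$. Inductively, a node $\bar{v}_{h+1}$ produced at the previous step sits in $V_{k-h-1}^{d}$, where $d$ is the direction of the edge just used; the direction required for the next edge is the other element of $\{0,1\}$, so the constraint $i\neq j$ in the construction (a child in direction $i$ may be grown only from a parent in $V_l^j$ with $j\neq i$) is automatically satisfied, and nonemptiness of $\sub_i(\tg(\bar{v}_{h+1}))$ supplies the desired child $\bar{v}_h\equiv_i\bar{v}_{h+1}$ with $\tg(\bar{v}_h)\in F_h(X;n)$. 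Because directions $0$ and $1$ strictly alternate along the branch, consecutive edges always point in different directions; the hypothesis that $k$ is even merely fixes this alternation pattern (direction $0$ at the top, direction $1$ at the bottom) and is otherwise harmless.

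The only genuine obstacle is the nonemptiness statement $\sub_i(\sigma)\neq\emptyset$ for satisfiable $\sigma$, which rests on the partition property of $F_{d-1}(X;n)$ together with the inflationary law (Ax 1b). Once it is in hand, the alternation bookkeeping and the verification of (1) and (2) are routine consequences of \eqref{deg} and the definition of the children in the construction.
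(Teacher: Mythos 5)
Your proposal is correct and follows essentially the same route as the paper: a top-down descent through the tree, alternating directions $0$ and $1$, with the existence of each required child guaranteed by the partition property of Theorem~\ref{andreka} together with (Ax 1b), and the labels' degrees tracked via \eqref{deg}. The only cosmetic difference is that the paper exhibits the needed member of $\sub_i(\tg(\bar{v}_{h+1}))$ explicitly (the unique form of the next lower degree lying above it), whereas you prove $\sub_i\neq\emptyset$ by contradiction and pick an arbitrary element; both arguments rest on the same two facts.
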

\begin{proof}
Let $\bar{v}_k$ be the only element in $V_0$. If $k=0$, then we are done. So let us suppose $k\not=0$. Now since $\d_{\alpha}\models\tau\not=0$ then there exists a normal form $\tau_1\in F_{k-1}(X;n)$ such that $\d_{\alpha}\models\tau\leq \tau_1$. 
%
By axiom (Ax 1b), we have $\d_{\alpha}\models\tau\leq c_0\tau_1$. Hence, $\tau_1\in \sub_0(\tau)$. Let $\bar{v}_{k-1}$ be the unique element in $V_1^0$ with $\bar{v}_k\equiv_0\bar{v}_{k-1}$ and $\tg(\bar{v}_{k-1})=\tau_1$. If $k=1$, then we are done. Suppose that $k>1$, since $\d_{\alpha}\models\tau_1\not=0$ then there exists unique $\tau_2\in F_{k-2}(X;n)$ such that $\d_{\alpha}\models\tau_1\leq\tau_2$. Again, by axiom (Ax 1b), we have $\tau_2\in \sub_1(\tau_1)$. Let $\bar{v}_{k-2}$ be the unique element in $V_2^1$ with $\bar{v}_{k-1}\equiv_1\bar{v}_{k-2}$ and $\tg(\bar{v}_{k-2})=\tau_2$. Continue in this manner, we get the desired sequence.
\end{proof}
\begin{thm}
Let $X$ be a non-empty finite set, the free algebra $\f{X}{\alpha}$ is atomless. 
\end{thm}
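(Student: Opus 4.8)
The plan is to reduce the claim to a statement about a single normal form and then to split that form using the spine provided by Lemma~\ref{seq}. By Theorem~\ref{andreka}\eqref{andc4}, every non-zero element of $\f{X}{\alpha}$ equals (in $\ds_{\alpha}$) a finite sum of satisfiable normal forms of a common degree. If such a sum has at least two summands it is visibly not an atom, since each summand is a non-zero element strictly below it. Hence it suffices to show that every satisfiable normal form $\tau\in F_k(X;n)$ fails to be an atom. Passing if necessary to one of its degree-$(k+1)$ refinements (a satisfiable normal form of the next degree lying below $\tau$), I may assume $k$ is even, which is exactly the hypothesis of Lemma~\ref{seq}.

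The heart of the argument is to produce two distinct satisfiable normal forms $\tau_1,\tau_2\in F_{k+1}(X;n)$ with $\d_{\alpha}\models\tau_1\le\tau$ and $\d_{\alpha}\models\tau_2\le\tau$. Since distinct normal forms of the same degree are disjoint (Theorem~\ref{andreka}\eqref{andc2}), this yields $0\neq\tau_1\le\tau$ together with $\tau_1\neq\tau$ (as $\tau_2\le\tau$ is disjoint from $\tau_1$), so $\tau$ is not minimal. To build them I would take the unit $V$ witnessing the satisfiability of $\tau$ together with the alternating sequence $\bar v_0,\ldots,\bar v_k$ of Lemma~\ref{seq}: a spine running from the root $\bar v_k$ (labelled $\tau$, at construction-depth $0$) down to the degree-$0$ leaf $\bar v_0$ (at construction-depth $k$), with the bottom edge $\bar v_0\equiv_1\bar v_1$ in direction $1$. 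I would then enlarge $V$ by attaching to $\bar v_0$ a single fresh node $\bar w$ with $\bar v_0\equiv_0\bar w$, at depth $k+1$; using direction $0$ is legitimate precisely because the last spine edge used direction $1$, matching the rule in the Section~4 construction that consecutive cylindrification directions differ. I would assign $\bar w$ a colour $\gamma\in F_0(X;n)$ as its degree-$0$ label. Because $X\neq\emptyset$ there are at least two colours available, and I would carry out this enlargement twice, once with $\gamma=\tg(\bar v_0)$ and once with some $\gamma\neq\tg(\bar v_0)$, obtaining two models $V_1^+$ and $V_2^+$.

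In each $V_i^+$ the new node sits at depth $k+1$, hence it does not disturb any degree-$k$ label; in particular the root still satisfies $\tau$, and the evaluation mechanism behind Lemma~\ref{7abibi} applies to each $V_i^+$ verbatim. The two choices of $\gamma$, however, change the degree-$1$ label of $\bar v_0$ (the fact $c_0\gamma$ now holds there), and this difference propagates up the spine $\bar v_0\equiv_1\bar v_1\equiv_0\cdots\equiv_0\bar v_k$ through $k$ nested cylindrifications, altering the degree-$(k+1)$ label realised at the root, via \eqref{deg} and Theorem~\ref{andreka}. Thus the roots of $V_1^+$ and $V_2^+$ satisfy genuinely different forms $\tau_1\neq\tau_2$ in $F_{k+1}(X;n)$, each refining $\tau$ and each satisfiable by the model that produced it. Concretely, $\tau_1$ and $\tau_2$ are separated by a splitting term $c_0c_1\cdots c_1c_0\,\delta$ of cylindrification-depth $k+1$ (following the spine and then the new edge), whose innermost argument $\delta\in F_0(X;n)$ detects the colour of $\bar w$; the form $\tau$ decides neither this term nor its negation precisely because the nesting depth exceeds $k$.

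The main obstacle is the propagation claim: one must verify that attaching a fresh node deep down genuinely changes the root's degree-$(k+1)$ description rather than being absorbed, and, dually, that in $V_2^+$ no other branch accidentally realises $\tau_1$. The spine of Lemma~\ref{seq} is exactly what makes the deep modification visible to the root through the required nested cylindrifications, while the enforced alternation $0,1,0,\ldots$ (available since $n\ge 2$) guarantees that the new witness is not collapsed by the idempotency of the cylindrifications, $\d_{\alpha}\models c_ic_ix=c_ix$. I expect the bulk of the work to lie in the book-keeping of labels along the spine, establishing that the two refinements are provably distinct and provably below $\tau$; once this is in place, the non-atomicity of $\f{X}{\alpha}$, and hence the theorem, follows at once.
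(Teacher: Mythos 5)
Your proposal is correct and follows essentially the same route as the paper's proof: reduce to a single satisfiable normal form $\tau\in F_k(X;n)$ with $k$ even, build the unit $V$ and the alternating spine $\bar v_0,\ldots,\bar v_k$ of Lemma~\ref{seq}, attach a fresh $\equiv_0$-witness at $\bar v_0$ whose colour does (resp.\ does not) agree with $\tg(\bar v_0)$, and propagate the resulting difference of degree-$(h+1)$ forms up the spine to obtain two disjoint satisfiable forms in $F_{k+1}(X;n)$ below $\tau$ (your $V_1^+$ plays the role of the paper's unmodified $V$). The ``book-keeping'' you defer is exactly the paper's Step~4: an induction on $h$ using the pairwise disjointness of the sibling labels in $\sub_i(\tg(\bar v_{h+1}))$ to see that $\bar v_h$ is the only $\equiv_i$-neighbour of $\bar v_{h+1}$ that can satisfy $\sigma_{h+1}$, so the two degree-$(h+2)$ forms at $\bar v_{h+1}$ disagree on the conjunct $c_i\sigma_{h+1}$ --- it goes through just as you predict.
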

\begin{proof}
Let $\sigma\in T_{\alpha}(X)$ be such that $\ds_{\alpha}\not\models\sigma=0$. We need to show that $\sigma$ is not an atom in $\f{X}{\alpha}$. By Theorem~\ref{andreka} and Theorem~\ref{main1} (1), there are finite $n\in\alpha+1$ ($n\geq 2$), finite $k\in\omega$ and non-empty finite set $S\subseteq F_k(X;n)$ such that $\ds_{\alpha}\models\sigma=\sum S$.
Thus, there is $\tau\in F_k(X;n)$ such that $\ds_{\alpha}\models0\not=\tau\leq\sigma$. 
%
%
So, to prove that $\sigma$ is not an atom in the free algebra $\f{X}{\alpha}$, it is enough to prove that $\tau$ is not an atom in $\f{X}{\alpha}$. We do this through the following steps. Without loss  of generality we can assume that $k$ is an even number. 

{\bf{Step 1:}}
Given the normal form $\tau$, construct the unit $V$, the actual base $U$, the tail $\bar{t}$ and the evaluation $\ev$ as constructed in the previous section. Recall that we have
\begin{equation}\label{original}
(\forall\bar{v}\in V) \ \ \ (V,\ev,\bar{v})\models \tg(\bar{v}).
\end{equation}

{\bf{Step 2:}} Let $\bar{v}_0,\ldots,\bar{v}_{k}$ be the sequence given in Lemma~\ref{seq}. Extend $V$ to $V^{+}$ as follows: Choose a brand new element $z\not\in U$ and let $V^{+}=V\cup\{(\bar{v}_0)_0^z\}$. Recall that $X$ is a non-empty set of free generators, so one can find a normal form $\varsigma\in F_0(X;n)$ such that $\tg(\bar{v}_0)\not=\varsigma$. Let $\tg((\bar{v}_0)_0^z)=\varsigma$. Define the evaluation $\ev^{+}:X\rightarrow\mathcal{P}(V^{+})$ as follows. For every $x\in X$, let $$\ev^{+}(x)=\{\bar{v}\in V^{+}:x\in \color(\tg(\bar{v}))\}.$$ 
By a similar argument to the proof of Lemma \ref{7abibi}, one can see that
\begin{equation}\label{extension}
(\forall\bar{v}\in V^{+}) \ \ \ (V^{+},\ev^{+},\bar{v})\models \tg(\bar{v}).
\end{equation}

\begin{figure}[!ht]
\centering
\begin{minipage}{\textwidth}
\centering
\begin{tikzpicture}
\draw (0,3.5) node {$V$};
\draw (0,0) ellipse (2 and 3);
\draw (0,-2.8) node {$\bullet$};
\draw (-0.3,-2.7) node {$\bar{v}_0$};
\draw (1,-2) node {$\bullet$};
\draw (1.2,-1.8) node {$\bar{v}_{1}$};
\draw (-1,-1) node {$\bullet$};
\draw (-1.3,-0.9) node {};
\draw (1,0) node {$\bullet$};
\draw (-1,1) node {$\bullet$};
\draw (1,2) node {$\bullet$};
\draw (0,2.8) node {$\bullet$};
\draw (-0.3,2.7) node {$\bar{v}_k$};
\draw (0,-2.8) -- (1,-2);
\draw (1,-2) -- (-1,-1);
\draw (-1,-1) -- (1,0);
\draw[dashed] (1,0) -- (-1,1);
\draw[dashed] (-1,1) -- (1,2);
\draw (1,2) -- (0,2.8);
\draw (5,3.5) node {$V_{+}$};
\draw (5,0) ellipse (2 and 3);
\draw (5,-2.8) node {$\bullet$};
\draw (4.7,-2.7) node {$\bar{v}_0$};
\draw (6,-2) node {$\bullet$};
\draw (6.2,-1.8) node {$\bar{v}_{1}$};
\draw (4,-1) node {$\bullet$};
\draw (3.7,-0.9) node {};
\draw (6,0) node {$\bullet$};
\draw (4,1) node {$\bullet$};
\draw (6,2) node {$\bullet$};
\draw (5,2.8) node {$\bullet$};
\draw (4.7,2.7) node {$\bar{v}_k$};
\draw (6,-3.5) node {$\bullet$};
\draw (6.5,-3.7) node {$(\bar{v}_0)_0^z$};
\draw (5,-2.8) -- (6,-2);
\draw (6,-2) -- (4,-1);
\draw (4,-1) -- (6,0);
\draw[dashed] (6,0) -- (4,1);
\draw[dashed] (4,1) -- (6,2);
\draw (6,2) -- (5,2.8);
\draw (5,-2.8) -- (6,-3.5);
\end{tikzpicture}
\end{minipage}
\end{figure}

{\bf{Step 3:}} Recall the sequence $\bar{v}_0.\ldots,\bar{v}_{k}\in V$. Let $h\in k+1$. Recall that $F_{h+1}(X;n)$ is a partition of the unit, see Theorem~\ref{andreka} \eqref{andc1}, \eqref{andc2}. Then there exists a unique normal form $\sigma_{h+1}\in F_{h+1}(X;n)$ such that $(V,\ev,\bar{v}_h)\models\sigma_{h+1}$. Similarly, there exists a unique normal form $\gamma_{h+1}\in F_{h+1}(X;n)$ such that $(V^{+},\ev^{+},\bar{v}_h)\models \gamma_{h+1}$. Thus, by \eqref{original}, \eqref{extension} and Theorem~\ref{andreka},  we have 
\begin{equation}\label{step4}
\ds_{\alpha}\models 0\not=\sigma_{h+1}\leq \tg(\bar{v}_h) \ \ \text{ and } \ \ \ds_{\alpha}\models 0\not=\gamma_{h+1}\leq \tg(\bar{v}_h).
\end{equation}

{\bf{Step 4:}} Now, we prove the following: For every $h\in k+1$, 
\begin{equation}\label{step5eq}
\ds_{\alpha}\models\sigma_{h+1}\cdot\gamma_{h+1}=0.
\end{equation}
We use induction on $h$. Since $\tg(\bar{v}_0)\not=\tg((\bar{v}_0)^z_0)=\varsigma$ and $(\forall \bar{v}\in V)\big[\bar{v}\equiv_0\bar{v}_0\implies \bar{v}=\bar{v}_0\big]$, then $\ds_{\alpha}\models\sigma_1\cdot-c_0 \varsigma\not=0$ and $\ds_{\alpha}\models\gamma_1\cdot c_0 \varsigma\not=0$. Thus, by definition of normal forms, $\ds_{\alpha}\models\sigma_1\leq -c_0 \varsigma$ and $\ds_{\alpha}\models\gamma_1\leq c_0 \varsigma$. Hence, $\ds_{\alpha}\models\sigma_1\cdot\gamma_1=0$. The induction step goes in a similar way. Suppose that $\ds_{\alpha}\models\sigma_{h+1}\cdot\gamma_{h+1}=0$, for some $h\in k$. Let $i<2$ be such that $i=h+1$ $(mod\text{ }2)$. Remember $\bar{v}_{h}\equiv_i\bar{v}_{h+1}$ and $\sigma_{h+1},\gamma_{h+1},\tg(\bar{v}_{h+1})\in F_{h+1}(X;n)$. By the induction hypothesis, without loss of generality, we may assume that $\ds_{\alpha}\models\sigma_{h+1}\cdot \tg(\bar{v}_{h+1})=0$.

Recall the construction of the unit $V$. Note that $\bar{v}_h\in\{(\bar{v}_{h+1})_i^{u}: u\in (\psi^i_{\bar{v}_{h+1}})^{*}\}$, and in fact 
\begin{equation}\label{ngbrs}
(\forall \bar{v}\in V) \ \ \ \big[\bar{v}\equiv_i\bar{v}_{h+1}\implies \bar{v}\in\{(\bar{v}_{h+1})_i^{u}: u\in (\psi^i_{\bar{v}_{h+1}})^{*}\}\cup\{\bar{v}_{h+1}\}\big]. 
\end{equation}
Remember that the labels of $(\bar{v}_{h+1})_i^{u}$'s were distinct normal forms in $\sub_i(\tg(\bar{v}_{h+1}))$. Thus, by Theorem~\ref{andreka} \eqref{andc2}, we have the following. For each element $\bar{v}\in V\setminus\{\bar{v}_{h+1},\bar{v}_{h}\}$, 
\begin{equation}\label{v}
\bar{v}\equiv_i\bar{v}_{h+1}\equiv_i\bar{v}_{h}\implies \ds_{\alpha}\models \tg(\bar{v})\cdot \tg(\bar{v}_{h})=0.
\end{equation}
 Therefore, by \eqref{step4}, \eqref{v} and the assumption that $\ds_{\alpha}\models\sigma_{h+1}\cdot \tg(\bar{v}_{h+1})=0$, we have 
\begin{equation}\label{vv}
(\forall\bar{v}\in V\setminus\{\bar{v}_{h}\}) \ \ \ \big[\bar{v}\equiv_i\bar{v}_{h+1}\implies (V,\ev,\bar{v})\not\models\sigma_{h+1} \ \text{ and } \ (V^{+},\ev^{+},\bar{v})\not\models\sigma_{h+1}\big].
\end{equation} 
Remember that $\sigma_{h+1}$ and $\gamma_{h+1}$ were chosen such that $(V,\ev,\bar{v}_{h})\models\sigma_{h+1} \ $ and $ \ (V^{+},\ev^{+},\bar{v}_{h})\models\gamma_{h+1}$. Hence, by the induction hypothesis, 
\begin{equation}\label{vh}
(V,\ev,\bar{v}_{h})\models\sigma_{h+1} \ \ \text{ and } \ \ (V^{+},\ev^{+},\bar{v}_{h})\not\models\sigma_{h+1}.
\end{equation}
We also note that $(V,\ev, \bar{v}_{h+1})\models\sigma_{h+2}$ and $(V^{+},\ev^{+}, \bar{v}_{h+1})\models\gamma_{h+2}$. Thus, by \eqref{ngbrs}, \eqref{vv} and \eqref{vh}, 
\begin{equation}
(V,\ev, \bar{v}_{h+1})\models\sigma_{h+2}\cdot c_i\sigma_{h+1} \ \ \text{ and } \ \ (V^{+},\ev^{+}, \bar{v}_{h+1})\models\gamma_{h+2}\cdot-c_i\sigma_{h+1}.
\end{equation}
Therefore, by construction of normal forms, $\ds_{\alpha}\models\sigma_{h+2}\leq c_i\sigma_{h+1}$ and $\ds_{\alpha}\models\gamma_{h+2}\leq-c_i\sigma_{h+1}$. In other words, $\ds_{\alpha}\models\sigma_{h+2}\cdot\gamma_{h+2}=0$. Hence, \eqref{step5eq} follows by the principle of mathematical induction.

In particular, there are two forms $\sigma_{k+1},\gamma_{k+1}\in F_{k+1}(X)$ each of which is satisfiable form below $\tau$ inside the free algebra $\f{X}{\alpha}$ \eqref{step4}. We also proved that these forms are disjoint \eqref{step5eq}. Therefore, $\tau$ is not an atom in the free algebra $\f{X}{\alpha}$ as desired.
\end{proof}
\subsection*{Zero dimensional elements in the free algebras} 
\begin{defn}\label{zdim}
Let $\a{A}\in\ds_{\alpha}$ and let $a\in A$. Define $\Delta a=\{i\in\alpha:c_ia\not=a\}$, the dimension set of $a$. The element $a$ is said to be zero-dimensional if and only if $\Delta a=0$.
\end{defn}
\begin{proof}[Proof of Theorem~\ref{main3}]
The free algebra $\f{\emptyset}{\alpha}$ contains only two elements $0$ and $1$. Now, suppose that $X\not=\emptyset$. Let $t\in T_{\alpha}(X)$ be such that $\ds_{\alpha}\not\models t=0$ and $\ds_{\alpha}\not\models t=1$. Then there are finite $n\in\alpha+1$ and finite $Y\subseteq X$ such that $n\geq 2$, $t\in T_{n}(Y)$ and $-t\in T_{n}(Y)$. Thus, by Theorem~\ref{andreka} (iii), one can find $k\in\omega$ and two normal forms $\tau,\sigma\in F_k(Y;n)$ such that $\ds_{\alpha}\models 0\not=\tau\leq t$ and $\ds_{\alpha}\models 0\not=\sigma\leq -t$. Now, we prove that $\ds_{\alpha}\not\models\sigma\cdot c_{i_0}\cdots c_{i_{l-1}}\tau=0$, for some $l\in \omega$ and $i_0,\ldots,i_{l-1}\in n$. Without loss of generality, we can assume that $k$ is even. 

Let $V^{\tau}$ and $V^{\sigma}$ be the two units (defined in the previous section) witnessing the satisfiability of $\tau$ and $\sigma$ respectively. We can suppose that $V^{\tau}$ and $V^{\sigma}$ share the same tail $\bar{t}$, while their actual bases are disjoint. Let $\bar{v}_0^{\tau},\ldots,\bar{v}_{k}^{\tau}\in V^{\tau}$ and $\bar{v}_0^{\sigma},\ldots,\bar{v}_{k}^{\sigma}\in V^{\sigma}$ be the sequences given by Lemma~\ref{seq}. 
Suppose that $\bar{v}_0^{\tau}=(x_0,\ldots,x_{n-1},\bar{t})$ and $\bar{v}_0^{\sigma}=(y_0,\ldots,y_{n-1},\bar{t})$. Define the following inductively, for each $j\in n$: Let $\bar{w}_1=(\bar{v}_0^{\tau})_0^{y_0}$ and let $\bar{w}_n=(\bar{w}_{n-1})_{n-1}^{y_{n-1}}$. Let $V=V^{\tau}\cup V^{\sigma}\cup\{\bar{w}_1,\ldots,\bar{w}_{n-1}\}$. 

\begin{figure}[!ht]
\centering
\begin{minipage}{\textwidth}
\centering
\begin{tikzpicture}
\draw (0, 0) ellipse (2 and 3);
\draw (0,3.5) node {$V^{\tau}$};
\draw (10, 0) ellipse (2 and 3);
\draw (10,3.5) node {$V^{\sigma}$};
\draw (0,-3.5) node {$\bar{v}^{\tau}_0$};
\draw (0,-3) node {$\bullet$};
\draw (10,-3.5) node {$\bar{v}^{\sigma}_0$};
\draw (10,-3) node {$\bullet$};
\draw (1,-4) node {$\bullet$};
\draw (2,-3) node {$\bullet$};
\draw (3,-4) node {$\bullet$};
\draw (0,-3) -- (1,-4);
\draw (1,-4) -- (2,-3);
\draw (3,-4) -- (2,-3);
\draw (4,-3) node {$\bullet$};
\draw (5,-4) node {$\bullet$};
\draw (6,-3) node {$\bullet$};
\draw (7,-4) node {$\bullet$};
\draw[dashed] (3,-4) -- (4,-3);
\draw[dashed] (4,-3) -- (5,-4);
\draw[dashed] (5,-4) -- (6,-3);
\draw[dashed] (6,-3) -- (7,-4);
\draw[dashed] (7,-4) -- (8,-3);
\draw (9,-4) node {$\bullet$};
\draw (8,-3) node {$\bullet$};
\draw (10,-3) -- (9,-4);
\draw (9,-4) -- (8,-3);
\draw (0,2) node {$\bullet$};
\draw (10,2) node {$\bullet$};
\draw (0,2.5) node {$\bar{v}_k^{\tau}$};
\draw (10,2.5) node {$\bar{v}_k^{\sigma}$};
\draw (-1,1) node {$\bullet$};
\draw (0,0) node {$\bullet$};
\draw (-1,-1) node {$\bullet$};
\draw (0,-2) node {$\bullet$};
\draw (0,2) -- (-1,1);
\draw (-1,1) -- (0,0);
\draw (0,0) -- (-1,-1);
\draw[dashed] (-1,-1) -- (0,-2);
\draw[dashed] (0,-3) -- (0,-2);
\draw (11,1) node {$\bullet$};
\draw (10,0) node {$\bullet$};
\draw (11,-1) node {$\bullet$};
\draw (10,-2) node {$\bullet$};
\draw (10,2) -- (11,1);
\draw (11,1) -- (10,0);
\draw (10,0) -- (11,-1);
\draw[dashed] (11,-1) -- (10,-2);
\draw[dashed] (10,-3) -- (10,-2);
\end{tikzpicture}
\end{minipage}
\end{figure}

Recall the labels of the elements of $V^{\tau}$ and $V^{\sigma}$. For each $x\in X\setminus Y$, let $\ev(x)=\emptyset$. For each $x\in Y$, let $\ev(x)=\{\bar{v}\in V^{\tau}\cup V^{\sigma}:x\in \color(\tg(\bar{v}))\}\cup\{\bar{w}_{n-1}:n-1=1\text{ and }x\in \color(\tg(\bar{v}_0^{\tau}))\}$. By a similar argument to Lemma~\ref{7abibi}, one can verify that 
\begin{equation}
(\forall \bar{v}\in V^{\tau}\cup V^{\sigma}) \ \ \ (V,\ev,\bar{v})\models \tg(\bar{v}).
\end{equation}
Thus, $(V,\ev,\bar{v}_k^{\tau})\models \tg(\bar{v}_k^{\tau})=\tau$ 
and $(V,\ev,\bar{v}_k^{\sigma})\models \tg(\bar{v}_k^{\sigma})=\sigma$.
Moreover, it is easy to see that there are $l\in\omega$ and $i_0,\ldots,i_{l-1}\in n$ such that $(V,\ev,\bar{v}_k^{\sigma})\models\sigma\cdot c_{i_0}\cdots c_{i_{l-1}}\tau$. Hence, 
\begin{equation}\label{nozd}
\ds_{\alpha}\not\models -t\cdot c_{i_0}\cdots c_{i_{l-1}}t=0.
\end{equation}
Therefore, $t$ is not zero-dimensional in the free algebra $\f{X}{\alpha}$. Otherwise, if $t$ is zero-dimensional then $\ds_{\alpha}\models -t\cdot c_{i_0}\cdots c_{i_{l-1}}t=-t\cdot t=0$, which contradicts \eqref{nozd}. 
\end{proof}
\section{Application in logic and related developments}\label{newsection}
One way of having nice versions of first order logic is to keep the set of formulas as it is but consider generalized models when giving meaning for these formulas. Such a move was first taken by L. Henkin in \cite{hen50}. The general assignment models for first order logic, where the set of assignments of variables into a model is allowed to be an arbitrary subset of the usual one, was introduced by I. N\'emeti \cite{nem86}. With selecting a subset of assignments, dependence between variables can be introduced into semantics. For a survey on generalized semantics, see \cite{AvBBN14}.

For now, let us suppose that $\alpha\geq 2$ is finite. By a \emph{suitable language} we mean a set of $\alpha$-many individual variables together with a set of relation symbols each of which is assigned a positive rank. For simplicity, we assume that our suitable languages do not contain functional symbols and/or constant symbols. Given a suitable language $\LL$, atomic formulas are constructed in the usual way using only the relation symbols and the variables of $\LL$. There is no atom of the form $(x=y)$ unless if the identity relation appears in $\LL$ as a binary relation symbol, the identity $=$ is not treated as a privileged logical symbol in this context. The set of formulas in language $\LL$ is then defined to be the smallest set that contain all atomic formulas and which is closed under the logical connectives.

\begin{defn}
Suppose that $\LL$ is a suitable language. A general assignment model is an ordered pair $(\a{M},V)$ with $\a{M}$ a standard first order model with domain $M$ and interpretation function $I$, and $V$ is a non-empty set of assignments on $\a{M}$, i.e.,, a subset of ${^{\mathrm{VAR}} M}$, where $\mathrm{VAR}$ is the set of all individual variables of $\LL$. The language $\LL$ is interpreted as usual, now at triples $\a{M},V,s$ with $s\in V$ -  with the following clauses for quantifiers:
$$\a{M},V,s\models\exists x \varphi \myiff \ \text{ for some }t\in V: s\equiv_xt\text{ and }\a{M},V,t\models \varphi.$$
Here, $\equiv_x$ is the relation between assignments of identity up to $x$-values.
\end{defn}
We denote the logical system consists of the set of formulas in suitable language $\LL$ together with the general assignment models by $\GAM(\LL)$. The notions of \emph{satisfiable formulas}, \emph{contradictions}, \emph{valid formulas}, etc, are defined in the usual way. We note that the class of relativized diagonal-free algebras $\ds_{\alpha}$ is the algebraic counterpart of the $\GAM(\LL)$'s. Thus, the following items are the natural logical reflection of the results in Theorem~\ref{main1} and Theorem~\ref{main2}.
\begin{thm}
Let $\LL$ be a suitable language. Each of the following is true.
\begin{enumerate}
\item[(1)] $\GAM(\LL)$ is finitely-schema axiomatizable.
\item[(2)] $\GAM(\LL)$ has the finite model property, i.e.,, every non-valid formula is falsified in a finite general assignment model.
\item[(3)] The set of validities of $\GAM(\LL)$ is decidable.
\item[(4)] $\GAM(\LL)$ has most of the positive definability properties: Craig's interpolation, Beth definability, etc.
\item[(5)] If $\LL$ has at least one relation symbol, then every finitely axiomatizable theory in $\GAM(\LL)$ cannot be both complete and consistent.
\end{enumerate}
\end{thm}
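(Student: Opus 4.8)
The strategy is to transport each item from the algebraic results (Theorems~\ref{main1} and \ref{main2}) across the standard correspondence between $\GAM(\LL)$ and the variety $\ds_{\alpha}$. First I would make the bridge precise: fix the suitable language $\LL$, let $\mathrm{VAR}$ play the role of the index set $\alpha$ (finite by the standing assumption), and associate to each formula $\varphi$ its meaning-set $\{s\in V:\a{M},V,s\models\varphi\}$ in a general assignment model $(\a{M},V)$. Under this reading the Boolean connectives become the Boolean operations and $\exists x$ becomes the relativized cylindrification $C_x^{[V]}$, so that each meaning-algebra is (a subalgebra of) $\a{P}(V)$, i.e.\ a member of $\ds_{\alpha}$, and conversely every algebra in $\ds_{\alpha}$ arises this way. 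Validity of $\varphi$ in $\GAM(\LL)$ then corresponds exactly to $\ds_{\alpha}\models\varphi^{*}=1$ (equivalently $\ds_{\alpha}\models(-\varphi)^{*}=0$) for the term $\varphi^{*}$ obtained by reading $\varphi$ in the signature of $\d_{\alpha}$.

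With this dictionary in place, items (1)--(3) are direct readings of Theorem~\ref{main1}: finite-schema axiomatizability of $\ds_{\alpha}$ gives (1); the finite base property, meaning $\ds_{\alpha}$ is generated by its finite-base algebras, yields the finite model property (2) once one checks that a finite base $U$ together with a finite unit $V\subseteq{}^{\alpha}U$ produces a genuinely finite general assignment model falsifying any non-valid formula; and the decidability of the equational theory of $\ds_{\alpha}$ gives decidability of the validities (3). For item (4), the positive definability properties follow from the super amalgamation property of Theorem~\ref{main1}(5), via the well-known translation between (super)amalgamation in the algebraic semantics and Craig interpolation and Beth definability for the corresponding logic; I would cite this correspondence rather than reprove it.

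Item (5) is the genuinely logical payoff and is the part I expect to require the most care. The plan is to argue contrapositively through atoms: a finitely axiomatizable, complete, consistent theory $T$ in $\GAM(\LL)$ corresponds to an atom in the relevant free algebra $\f{X}{\alpha}$, where $X$ is the (nonempty, since $\LL$ has a relation symbol) set of generators coming from the atomic formulas of $\LL$. Concretely, a finite axiomatization gives a single term $\tau$ (the conjunction of the axioms), consistency gives $\ds_{\alpha}\not\models\tau=0$, and completeness forces $\tau$ to be minimal nonzero, i.e.\ an atom. But Theorem~\ref{main2} states that $\f{X}{\alpha}$ is atomless whenever $X\neq\emptyset$, a contradiction. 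The main obstacle is verifying that the generating set $X$ is genuinely nonempty and that the logical notions translate exactly to the algebraic ones: here the hypothesis that $\LL$ contains at least one relation symbol is essential, since with no relation symbols there are no nontrivial atomic formulas, $X=\emptyset$, and $\f{\emptyset}{\alpha}$ is the two-element (hence atomic) algebra, so the conclusion genuinely fails. I would therefore state carefully why one relation symbol suffices to make $X\neq\emptyset$, and confirm that \emph{complete} in the logical sense (every sentence or its negation is provable) matches \emph{atom} in the Lindenbaum--Tarski algebra, which is the only delicate point.
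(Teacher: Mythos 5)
Your overall strategy --- transporting Theorems~\ref{main1} and~\ref{main2} across a Lindenbaum--Tarski correspondence --- is the one the paper itself sketches, but your bridge is asserted too strongly, and it fails at exactly the point the paper singles out as the real difficulty. The Lindenbaum--Tarski algebra $\a{F}(\LL)$ is generated by the classes of atomic formulas, but these generators are \emph{not} free in $\ds_{\alpha}$: the meaning of an atomic formula built from a rank-$k$ relation symbol cannot distinguish two assignments that agree on the variables occurring in it (it is ``regular''). Concretely, $\exists x_j\,R(x_i)\leftrightarrow R(x_i)$ is $\GAM(\LL)$-valid whenever $j\neq i$, while $c_j p=p$ is certainly not an identity of $\ds_{\alpha}$. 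Hence your claim that validity in $\GAM(\LL)$ corresponds \emph{exactly} to $\ds_{\alpha}\models\varphi^{*}=1$ is false: $\a{F}(\LL)$ is only a homomorphic image of $\f{R}{\alpha}$, not isomorphic to it.

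This gap is most damaging for item (5), which you rightly identify as the main payoff. A finitely axiomatizable complete consistent theory yields an atom of $\a{F}(\LL)$, not of $\f{R}{\alpha}$, and atomlessness is not inherited by homomorphic images (the two-element quotient of any nontrivial algebra is atomic). So Theorem~\ref{main2} does not by itself rule out such theories; one must either pass to the appropriate class of ``regular'' algebras, or rerun the normal-form and unit constructions of the earlier sections directly inside $\GAM(\LL)$, checking that the units built there interpret atomic formulas regularly --- which is what the paper means by a ``mechanical translation'' of the algebraic proofs. The same issue touches (2) and (3) as you state them: decidability of the equational theory of $\ds_{\alpha}$ decides $\ds_{\alpha}\models\varphi^{*}=1$, which is sufficient but not necessary for $\GAM(\LL)$-validity. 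The delicate point is therefore not whether ``complete theory'' matches ``atom'' in the Lindenbaum--Tarski algebra (it does, and that part of your argument is fine), but whether the Lindenbaum--Tarski algebra is the free algebra --- and it is not.
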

Let $\a{F}(\LL)$ denotes Lindenbaum-Tarski algebra of $\GAM(\LL)$. Let $R$ be the set of all atomic formulas in language $\LL$. To deduce the above theorem from our algebraic results herein, it would be enough to prove that $\a{F}(\LL)\cong \f{R}{\alpha}$. The function assigning $\varphi^{\a{M},V}=\{s\in V : \a{M},V,s\models\varphi\}$, the meaning of $\varphi$ in $(\a{M},V)$, to $\varphi$ is a homomorphism from $\a{F}(\LL)$ to the full algebra $\a{P}(V)$. Not every homomorphism from $\a{F}(\LL)$ to $\a{P}(V)$ is of this form, though, because the meanings of the atomic formulas have to be $k$-regular in the sense that they do not distinguish sequences that agree on the first $k$ indices. Thus, $\a{F}(\LL)$ is a homomorphic image of $\f{R}{\alpha}$, but not necessarily isomorphic to it. In the literature, investigating so-called regular algebras is used to fill this gap.

We also note that a completely mechanical translation of the proofs of our algebraic theorems can be used to prove the above theorem. Such translation, from algebra to logic, was used in \cite[Chapter 2]{myphd} to obtain the following results for guarded fragments of first order logic: (1) Every satisfiable formula of guarded fragment can be extended to a finitely axiomatizable, complete and consistent theory. (2) The same is not true if we replace guarded fragment with its solo-quantifiers version; when polyadic quantifiers are not allowed. Same results hold for loosely guarded fragments, clique guarded fragments and packed fragments of first order logic. 
\subsection{Atomicity of free algebras in algebraic logic}
It was mentioned in the introduction that I. N\'emeti used a metalogical proof (translation of G\"odel's incompleteness theorem) to show non-atomicity of finitely generated free algebras of $\CA_{\alpha}$, if $\alpha\geq 3$. Such metalogical argument could be used also to deduce non-atomicity of finitely generated free algebras of other important classes of algebras of logics, e.g.,, representable cylindric algebras $\mathsf{Gs}_{\alpha}$ (if $\alpha\geq 3$), relation algebras $\mathsf{RA}$, representable relation algebras $\mathsf{RRA}$ and semi-associative relation algebras $\mathsf{SA}$. See \cite{nem85} and \cite{nem86}. 

So far, only one atomicity result has been obtained. The proof that $\a{Fr}_X\CA_{2}$ is atomic, for finite $X$, relies on the facts that $\CA_2$ is a discriminator variety and the equational theory of $\CA_2$ coincides with the equational theory of the finite $\CA_2$'s, c.f. \cite[2.5.7]{hmt1}. This could be generalized by H. Andr\'eka, B. J\'onsson and I. N\'emeti \cite{AJN91} as follows: For any variety of Boolean algebras with operators $\mathsf{K}$ of finite similarity type, if $\mathsf{K}$ is generated by its finite members then
\begin{equation*}
\mathsf{K}\text{ is a discriminator variety }\implies \a{Fr}_X\mathsf{K} \text{ is atomic, for every finite }X.
\end{equation*}

In the literature of algebraic logic, there are several varieties (of finite similarity types) that are generated by their finite members but none of them is discriminator. Here are some examples. The class of relativized cylindric set algebras $\mathsf{Crs}_{\alpha}$ and its variations $\mathsf{D}_{\alpha}$ and $\mathsf{G}_{\alpha}$. The classes of non-commutative cylindric algebras $\mathsf{WCA}_{\alpha}$ and $\mathsf{NCA}_{\alpha}$. The classes of non-associative relation algebras $\mathsf{NA}$ and weakened associative relation algebras $\mathsf{WA}$. The definitions of all these classes can be found in \cite{nem86} and \cite{nem95}. The finite algebra property of these classes can be found in \cite{nem86}, \cite{nem95}, \cite{nem87} and \cite{AHN99}. 

The question whether the finitely generated free algebras of these classes are atomic remained open for three decades. Recently, negative answers have been obtained for this problem. For finite $X$, the free algebras $\a{Fr}_X\mathsf{Crs}_{\alpha}$, $\a{Fr}_X\mathsf{D}_{\alpha}$ and $\a{Fr}_X\mathsf{G}_{\alpha}$ were shown to be not atomic in \cite{myphd}. Non-atomicity of free non-commutative cylindric algebras $\mathsf{WCA}_{\alpha}$ and $\mathsf{NCA}_{\alpha}$ was proved in \cite{myigpl}. In \cite{myadvances}, similar non-atomicity result for the class $\mathsf{NA}$ was obtained. Finally, preprint \cite{myrsl} includes an idea for showing non-atomicity of $\a{Fr}_X\mathsf{WA}$, for finite $X$. It is worthy of note that the methods in these references are quite different, in each case there is a different difficulty.

Hence, roughly speaking, we can say that being non-discriminator in the above classes was more dominant than the finite algebra property, and it caused the non-atomicity of finitely generated free algebras. A natural question arises here: is that always true?
\begin{prob}
Find a variety of Boolean algebras with operators $\mathsf{K}$ such that:
\begin{enumerate}
\item[(a)] the similarity type of $\mathsf{K}$ is finite,
\item[(b)] $\mathsf{K}$ is generated by its finite members,
\item[(b)] $\mathsf{K}$ is not discriminator variety, and
\item[(d)] all the finitely generated free algebras of $\mathsf{K}$ are atomic.
\end{enumerate}
\end{prob}
\subsection{Infinite dimensional free algebras}\label{lastsubsection} Here, assume that $\alpha\geq 2$ is an infinite ordinal. The free algebras of infinite dimensional cylindric algebras are more interesting. The metalogical technique used in \cite{nem86} provides a proof for non-atomicity of the free algebras of $\CA_{\alpha}$. The non-atomicity of free algebras of $\mathsf{Gs}_{\alpha}$ can be obtained by a purely algebraic argument, see \cite{nem84}. This argument uses the fact that $\mathsf{Gs}_{\alpha}$ is generated as a variety by its locally finite dimensional algebras. The same is not true for classes $\mathsf{Crs}_{\alpha}$, $\mathsf{D}_{\alpha}$ and $\mathsf{G}_{\alpha}$, however with a different algebraic technique non-atomicity of the free algebras of these classes was shown in \cite[Appendix 2]{myphd}.

We note that $\ds_{\alpha}$ is generated by its locally finite dimensional algebras, however the method used to prove non-atomicity of infinite dimensional free algebras of $\mathsf{Gs}_{\alpha}$ cannot work here, it depends essentially on the existence of diagonals. The method used here to prove non-atomicity of the free algebras of $\ds_{\alpha}$ is completely different than the one in \cite[Appendix 2]{myphd}.

\section*{Acknowledgment} We are deeply indebted to the anonymous referee for his fruitful comments and valuable suggestions.

\bibliographystyle{plain}

\end{document}